\providecommand{\U}[1]{\protect\rule{.1in}{.1in}}
\newtheorem{theorem}{Theorem}[section]
\theoremstyle{plain}
\newtheorem{corollary}{Corollary}[section]
\newtheorem{remark}{Remark}[section]
\numberwithin{equation}{section}
\begin{document}
\title[A new approach to weighted Hardy-Rellich inequalities]{A new approach to weighted Hardy-Rellich inequalities: improvements,
symmetrization principle and symmetry breaking}
\author{Anh Xuan Do}
\address{Anh Xuan Do: Department of Mathematics\\
University of Connecticut\\
Storrs, CT 06269, USA}
\email{anh.do@uconn.edu}
\author{Nguyen Lam}
\address{Nguyen Lam: School of Science and the Environment\\
Grenfell Campus, Memorial University of Newfoundland\\
Corner Brook, NL A2H5G4, Canada}
\email{nlam@grenfell.mun.ca}
\author{Guozhen Lu }
\address{Guozhen Lu: Department of Mathematics\\
University of Connecticut\\
Storrs, CT 06269, USA}
\email{guozhen.lu@uconn.edu}
\thanks{A. Do and G. Lu were partially supported by collaborations grants from the
Simons Foundation and a Simons Fellowship. N. Lam was partially supported by
an NSERC Discovery Grant.}
\date{\today}

\begin{abstract}
We investigate necessary and sufficient conditions on the weights for the
Hardy-Rellich inequalities to hold, and propose a new way to use the notion of
Bessel pair to establish the optimal Hardy-Rellich type inequalities. Our
results sharpened earlier Hardy-Rellich and Rellich type inequalities in the
literature. We also study several results about the symmetry and symmetry
breaking properties of the Rellich type and Hardy-Rellich type inequalities,
and then partially answered an open question raised by Ghoussoub and
Moradifam. Namely, we will present conditions on the weights such that the
Rellich type and Hardy-Rellich type inequalities hold for all functions if and
only if the same inequalities hold for all radial functions.

\end{abstract}
\maketitle

\section{Introduction}

The classical Hardy inequality claims that for dimensions $N\neq2:$
\begin{equation}
\int_{\mathbb{R}^{N}}\left\vert \nabla u\right\vert ^{2}dx\geq\frac{\left(
N-2\right)  ^{2}}{4}\int_{\mathbb{R}^{N}}\frac{\left\vert u\right\vert ^{2}%
}{\left\vert x\right\vert ^{2}}dx\text{, }\forall u\in C_{0}^{\infty}\left(
\mathbb{R}^{N}\right)  \text{.} \label{H}%
\end{equation}
It is one of the most frequently used inequalities in analysis, and is studied
intensively and extensively in the literature. We refer the interested reader
to, e.g., \cite{BEL, KMP2007, KP, Maz11, OK}, which are some of the standard
references on the topic.

It it worth mentioning that the constant $\frac{\left(  N-2\right)  ^{2}}{4}$
in (\ref{H}) is optimal, and that the Schr\"{o}dinger operator $-\Delta
-\frac{\left(  N-2\right)  ^{2}}{4}\frac{1}{\left\vert x\right\vert ^{2}}$ is
positive definite and critical on $\mathbb{R}^{N}$. Therefore, for instance,
there is no strictly positive $P$ in $C^{1}\left(  0,\infty\right)  $ such
that the inequality%
\[
\int_{\mathbb{R}^{N}}\left\vert \nabla u\right\vert ^{2}dx-\frac{\left(
N-2\right)  ^{2}}{4}\int_{\mathbb{R}^{N}}\frac{\left\vert u\right\vert ^{2}%
}{\left\vert x\right\vert ^{2}}dx\geq\int_{\mathbb{R}^{N}}P\left(  \left\vert
x\right\vert \right)  \left\vert u\right\vert ^{2}dx
\]
holds for all $u\in C_{0}^{\infty}\left(  \mathbb{R}^{N}\right)  $. However,
the situation is very different on bounded domains. Indeed, in the celebrated
paper of Brezis and V\'{a}zquez \cite{BV97}, the authors proved that

\begin{theorem}
[Brezis-V\'{a}zquez \cite{BV97}]\label{t0}\textit{For any bounded domain
}$\Omega\subset%
\mathbb{R}
^{N}$\textit{, }$N\geq2$\textit{, and every }$u\in H_{0}^{1}\left(
\Omega\right)  $\textit{, }%
\begin{equation}%
{\displaystyle\int\limits_{\Omega}}
\left\vert \nabla u\right\vert ^{2}dx-\left(  \frac{N-2}{2}\right)  ^{2}%
{\displaystyle\int\limits_{\Omega}}
\frac{\left\vert u\right\vert ^{2}}{\left\vert x\right\vert ^{2}}dx\geq
z_{0}^{2}\omega_{N}^{\frac{2}{N}}\left\vert \Omega\right\vert ^{-\frac{2}{N}}%
{\displaystyle\int\limits_{\Omega}}
\left\vert u\right\vert ^{2}dx \label{HBV}%
\end{equation}
\textit{where }$z_{0}=2.4048...$\textit{ is the first zero of the Bessel
function }$J_{0}\left(  z\right)  $ \textit{and} $\omega_{N}$ \textit{is the
volume of the unit ball in} $\mathbb{R}^{N}$\textit{. The constant }$z_{0}%
^{2}\omega_{N}^{\frac{2}{N}}\left\vert \Omega\right\vert ^{-\frac{2}{N}}%
$\textit{ is optimal when }$\Omega$\textit{ is a ball but is not achieved in
the Sobolev space }$H_{0}^{1}\left(  \Omega\right)  $\textit{.}
\end{theorem}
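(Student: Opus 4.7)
The plan is to establish (\ref{HBV}) by first factoring out the Hardy term via the substitution $u = |x|^{-(N-2)/2} v$, then reducing to a ball via Schwarz symmetrization, and finally identifying the sharp constant with the first eigenvalue of a Bessel-type Sturm-Liouville problem. For the first step I would take $u(x) = |x|^{-(N-2)/2}\, v(x)$ on $u \in H_0^1(\Omega)$ (justified by approximation with test functions supported away from the origin, since $\{0\}$ has zero $H^1$-capacity in dimension $N \geq 2$). Expanding $|\nabla u|^2$ and integrating the cross term by parts, using $\Delta(|x|^{-(N-2)/2}) = -\left(\frac{N-2}{2}\right)^2 |x|^{-(N+2)/2}$, one obtains the identity
\[
\int_\Omega |\nabla u|^2\, dx - \left(\frac{N-2}{2}\right)^2 \int_\Omega \frac{u^2}{|x|^2}\, dx = \int_\Omega |x|^{-(N-2)} |\nabla v|^2\, dx,
\]
combined with the trivial equality $\int_\Omega u^2\, dx = \int_\Omega |x|^{-(N-2)} v^2\, dx$. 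Proving (\ref{HBV}) thus reduces to the weighted Poincar\'e inequality $\int_\Omega |x|^{-(N-2)} |\nabla v|^2 \geq z_0^2 \omega_N^{2/N} |\Omega|^{-2/N} \int_\Omega |x|^{-(N-2)} v^2$.

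Second, I would invoke Schwarz symmetrization on the original inequality. Letting $u^*$ denote the radial decreasing rearrangement of $|u|$ on $\Omega^* = B_R$ with $R = (|\Omega|/\omega_N)^{1/N}$, three standard facts combine favorably: $\int (u^*)^2 = \int u^2$, $\int |\nabla u^*|^2 \leq \int |\nabla u|^2$ (P\'olya-Szeg\H{o}), and $\int (u^*)^2/|x|^2 \geq \int u^2/|x|^2$ (Hardy-Littlewood rearrangement, since the weight $1/|x|^2$ is radially decreasing). Consequently the left-hand side of (\ref{HBV}) does not increase and the right-hand side is preserved under $u \mapsto u^*$, reducing the problem to a radial nonnegative function on $B_R$. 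In polar coordinates the weighted Poincar\'e inequality becomes
\[
\int_0^R r (v'(r))^2\, dr \geq \frac{z_0^2}{R^2} \int_0^R r\, v(r)^2\, dr, \quad v(R) = 0,
\]
which is precisely the statement that $z_0^2/R^2$ is the first Dirichlet eigenvalue of the singular Sturm-Liouville problem $-(r v')' = \lambda r v$ on $(0,R)$. Its regular-at-zero solutions are $v(r) = J_0(\sqrt\lambda\, r)$, and the boundary condition forces $\sqrt\lambda\, R = z_0$, yielding $\lambda_1 = z_0^2/R^2 = z_0^2 \omega_N^{2/N} |\Omega|^{-2/N}$.

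Sharpness on the ball is immediate from this eigenvalue identification; non-attainment in $H_0^1(\Omega)$ for $N \geq 3$ follows because the would-be extremal $u = |x|^{-(N-2)/2} J_0(z_0 |x|/R)$ behaves like $|x|^{-(N-2)/2}$ near the origin and therefore fails to lie in $H_0^1(B_R)$. The main technical hurdle I anticipate is the rigorous justification of the factorization identity when $0 \in \Omega$, which rests on the density of smooth functions vanishing near $0$ in $H_0^1(\Omega)$ together with a controlled integration by parts at the singularity; once this is secured, both the symmetrization reduction and the one-dimensional Bessel eigenvalue analysis are classical.
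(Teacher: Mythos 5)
The paper does not prove this theorem; it is quoted as a known result from Brezis--V\'azquez \cite{BV97}, and your argument is in fact essentially their original proof: Schwarz symmetrization of the full functional (P\'olya--Szeg\H{o} for the gradient term, Hardy--Littlewood for the singular term), followed by the ground-state substitution $u=|x|^{-(N-2)/2}v$ for radial functions, which reduces the problem to the two-dimensional radial eigenvalue problem $-(rv')'=\lambda rv$, $v(R)=0$, whose first eigenvalue is $z_0^2/R^2$. The one presentational caution is the order of your two reductions: the substitution should be performed \emph{after} symmetrization (as your final chain in fact does), since a weighted P\'olya--Szeg\H{o} inequality for $\int|x|^{-(N-2)}|\nabla v|^2$ on a general domain is not available; and for the non-attainment claim you should note that any minimizer would be forced, via the equality cases in the symmetrization and in the one-dimensional problem, to coincide with $|x|^{-(N-2)/2}J_0(z_0|x|/R)$, which fails to lie in $H^1$ when $N\geq 3$.
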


The fact that $z_{0}^{2}\omega_{N}^{\frac{2}{N}}\left\vert \Omega\right\vert
^{-\frac{2}{N}}$ is optimal but is still not achieved by nontrivial functions
led\textit{ }Brezis and V\'{a}zquez to ask whether the term on the LHS of
(\ref{HBV}) is just the first term of an infinite series of remainder terms.
Actually, Brezis and V\'{a}zquez's pioneering improvements of (\ref{H}) and
their question have received enormous attention in the literature. The
interested reader is referred to \cite{BFT04, BGGP20, BGR22, CFLL23, DPP17,
DLL22, FLL21, FLL23, FS08, LLZ19, LLZ20, Wang}, to name just a few, for some
recent developments on the field.

There are several proofs of (\ref{H}) in the literature. For instance, one of
the standard methods to prove the Hardy inequality (\ref{H}) and many other
functional and geometric inequalities is to use the classical rearrangement
argument. Indeed, by the well-known P\'{o}lya--Szeg\H{o} inequality and the
Hardy-Littlewood inequality (see, e.g. \cite{LL01}), we have that%
\begin{align*}
\int_{\mathbb{R}^{N}}\left\vert \nabla u\right\vert ^{2}dx  &  \geq
\int_{\mathbb{R}^{N}}\left\vert \nabla u^{\ast}\right\vert ^{2}dx\\
\int_{\mathbb{R}^{N}}\frac{\left\vert u^{\ast}\right\vert ^{2}}{\left\vert
x\right\vert ^{2}}dx  &  \geq\int_{\mathbb{R}^{N}}\frac{\left\vert
u\right\vert ^{2}}{\left\vert x\right\vert ^{2}}dx.
\end{align*}
Here $u^{\ast}$ is the familiar symmetric decreasing rearrangement of $u$.
(See the monograph of Stein and Weiss \cite{SteinWeiss}, Chapter V.)
Therefore, one can assume that $u$ is radially symmetric and (\ref{H}) is a
consequence of the one-dimensional Hardy inequality :%
\[
\int_{0}^{\infty}\left\vert u^{\prime}\left(  r\right)  \right\vert ^{2}%
\geq\frac{1}{4}\int_{0}^{\infty}\frac{\left\vert u\left(  r\right)
\right\vert ^{2}}{r^{2}}dr.
\]
It is also worthy to note that in general, a higher order version of the
P\'{o}lya--Szeg\H{o} principle does not hold. Therefore, it is challenging and
interesting to establish symmetrization principles for higher order functional
and geometric inequalities. One of our main goals in this paper is to set up
such symmetrization principles for higher order Hardy-Rellich inequalities and
Rellich inequalities. See Theorem \ref{T5} and Theorem \ref{T6} for more details.

In \cite{GM}, Ghoussoub and Moradifam introduced the notion of Bessel pair and
used it to unify and improve many known Hardy type inequalities in the
literature. Moreover, they also applied this concept to establish several
Hardy-Rellich type inequalities. In particular, they proved that

\begin{theorem}
[Ghoussoub-Moradifam \cite{GM}]\label{t1}\textit{Let }$0<R\leq\infty$\textit{,
}$V_{1}$ \textit{and }$W_{1}$\textit{ be positive }smooth \textit{functions on
}$\left(  0,R\right)  $. Assume that $\left(  V_{1},W_{1}\right)  $ is a
$N$-dimensional Bessel pair on $\left(  0,R\right)  $, that is, there exists a
positive function $\varphi$ such that
\[
\left(  r^{N-1}V_{1}\varphi^{\prime}\right)  ^{\prime}+r^{N-1}W_{1}%
\varphi=0\text{ on }\left(  0,R\right)  \text{.}%
\]
Then we have for all $u\in C_{0}^{\infty}\left(  B_{R}\setminus\left\{
0\right\}  \right)  $ that
\begin{equation}
\int_{B_{R}}V_{1}\left(  \left\vert x\right\vert \right)  \left\vert \nabla
u\right\vert ^{2}dx\geq\int_{B_{R}}W_{1}\left(  \left\vert x\right\vert
\right)  \left\vert u\right\vert ^{2}dx. \label{wH}%
\end{equation}
Also, if
\begin{equation}
W_{1}-\frac{2V_{1}}{r^{2}}+\frac{2V_{1}^{\prime}}{r}-V_{1}^{\prime\prime}%
\geq0\text{ on }\left(  0,R\right)  , \label{Con}%
\end{equation}
then we have for all $u\in C_{0}^{\infty}\left(  B_{R}\setminus\left\{
0\right\}  \right)  $ that
\begin{align}
\int_{B_{R}}V_{1}\left(  \left\vert x\right\vert \right)  \left\vert \Delta
u\right\vert ^{2}dx  &  \geq\int_{B_{R}}W_{1}\left(  \left\vert x\right\vert
\right)  \left\vert \nabla u\right\vert ^{2}dx\label{HR}\\
&  \hspace{0.2in}+\left(  N-1\right)  \int_{B_{R}}\left(  \frac{V_{1}\left(
\left\vert x\right\vert \right)  }{\left\vert x\right\vert ^{2}}-\frac
{V_{1}^{\prime}\left(  \left\vert x\right\vert \right)  }{\left\vert
x\right\vert }\right)  \left\vert \nabla u\right\vert ^{2}dx.\nonumber
\end{align}

\end{theorem}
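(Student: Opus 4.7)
The plan is to first establish (\ref{wH}) by a ground-state substitution, and then to prove (\ref{HR}) via a spherical-harmonic decomposition, reducing matters to one-dimensional inequalities in each mode.

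For (\ref{wH}), I would substitute $u = \varphi v$, where $\varphi > 0$ is the function provided by the Bessel pair hypothesis and $v = u/\varphi \in C_0^\infty(B_R \setminus \{0\})$. Expanding
\[
V_1 |\nabla u|^2 = V_1 \varphi^2 |\nabla v|^2 + V_1 \varphi \nabla \varphi \cdot \nabla(v^2) + V_1 v^2 |\nabla \varphi|^2
\]
and integrating, the cross term reduces, via integration by parts, to an integral against $-\operatorname{div}(V_1 \varphi \nabla \varphi)$. Since $V_1$ and $\varphi$ are radial, $\operatorname{div}(V_1 \varphi \nabla \varphi) = \tfrac{1}{r^{N-1}} (r^{N-1} V_1 \varphi \varphi')'$; expanding via Leibniz and substituting the Bessel pair ODE $(r^{N-1} V_1 \varphi')' = -r^{N-1} W_1 \varphi$ gives $\operatorname{div}(V_1 \varphi \nabla \varphi) = V_1 (\varphi')^2 - W_1 \varphi^2$. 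Collecting terms yields the \emph{ground-state identity}
\[
\int_{B_R} V_1 |\nabla u|^2\, dx = \int_{B_R} V_1 \varphi^2 |\nabla(u/\varphi)|^2\, dx + \int_{B_R} W_1 u^2\, dx,
\]
from which (\ref{wH}) is immediate.

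For (\ref{HR}), I would decompose $u$ into spherical harmonics: $u(x) = \sum_k f_k(r) Y_k(\sigma)$, with $-\Delta_\sigma Y_k = \lambda_k Y_k$ and $\lambda_k = k(k+N-2)$. By orthonormality of the $Y_k$, both sides of (\ref{HR}) decouple into a sum over $k$, so it suffices to prove the inequality for single modes $u = f(r) Y_k(\sigma)$. This reduces (\ref{HR}) to the one-dimensional inequality
\[
\int_0^R r^{N-1} V_1 \Bigl(f'' + \tfrac{N-1}{r} f' - \tfrac{\lambda_k}{r^2} f\Bigr)^2 dr \ \geq\ \int_0^R r^{N-1} \Bigl(W_1 + (N-1)\bigl(\tfrac{V_1}{r^2} - \tfrac{V_1'}{r}\bigr)\Bigr) \Bigl(f'^2 + \tfrac{\lambda_k}{r^2} f^2\Bigr) dr.
\]
When $k = 0$ (radial case), expansion and integration by parts reduces this to $\int_0^R V_1 f''^2 r^{N-1} dr \geq \int_0^R W_1 f'^2 r^{N-1} dr$, which is exactly (\ref{wH}) applied to $f'$ regarded as a radial function on $\mathbb{R}^N$; no hypothesis beyond the Bessel pair condition is needed.

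For $k \geq 1$, expanding the squared radial Laplacian and integrating by parts repeatedly produces a residual proportional to $\lambda_k \int_0^R r^{N-5}\bigl[W_1 - \tfrac{2V_1}{r^2} + \tfrac{2V_1'}{r} - V_1''\bigr] f^2 dr$ together with other terms of definite sign. Condition (\ref{Con}) is precisely the hypothesis forcing this residual to be nonnegative, closing the argument. The \textbf{main obstacle} will be the integration-by-parts bookkeeping for $k \geq 1$: tracking the many cross terms in $f, f', f''$ and identifying the coefficient of $f^2$ in the $\lambda_k$-residual as exactly the combination appearing in (\ref{Con}). It is this precise matching that makes (\ref{Con}) the sharp sufficient condition.
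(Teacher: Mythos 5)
This statement is quoted from Ghoussoub--Moradifam \cite{GM}; the paper does not reprove it, so the only internal material to compare against is the mode-by-mode expansion in Section 2, which your single-mode inequality reproduces correctly. Your part (1) (the ground-state identity $\int V_1|\nabla u|^2=\int V_1\varphi^2|\nabla(u/\varphi)|^2+\int W_1 u^2$) is correct and is the standard argument, and your $k=0$ reduction to $\int_0^R V_1|f''|^2r^{N-1}dr\ge\int_0^R W_1|f'|^2r^{N-1}dr$ (i.e.\ \eqref{wH} applied to $f'$) is also right.

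The gap is in the mechanism you describe for $k\ge1$. Integration by parts alone cannot leave ``the \eqref{Con}-combination plus terms of definite sign,'' because after the $k=0$ part is subtracted the residual at mode $k$ is $\lambda_k$ times
\[
A=2\!\int_0^R\! V_1r^{N-3}|f'|^2+(\lambda_k+N-7)\!\int_0^R\! V_1r^{N-5}|f|^2+4\!\int_0^R\! V_1'r^{N-4}|f|^2-\!\int_0^R\! V_1''r^{N-3}|f|^2-\!\int_0^R\! W_1r^{N-3}|f|^2,
\]
in which $W_1$ enters with a \emph{minus} sign, whereas it enters \eqref{Con} with a \emph{plus} sign; also note the bracket must carry weight $r^{N-3}$, not $r^{N-5}$. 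To isolate the \eqref{Con}-term you must write $-\int W_1r^{N-3}|f|^2=+\int W_1r^{N-3}|f|^2-2\int W_1r^{N-3}|f|^2$, and the piece $2\int W_1 r^{N-3}|f|^2$ is \emph{not} of definite sign relative to the remaining $V_1$-terms by algebra alone: it must be absorbed by a \emph{second} application of the Bessel-pair inequality, namely \eqref{wH} applied to $f/r$, which after integration by parts reads
\[
\int_0^R V_1r^{N-3}|f'|^2+(N-3)\int_0^R V_1r^{N-5}|f|^2+\int_0^R V_1'r^{N-4}|f|^2\ \ge\ \int_0^R W_1r^{N-3}|f|^2.
\]
With this one gets the exact decomposition
\[
A=\int_0^R\Bigl(W_1-\tfrac{2V_1}{r^2}+\tfrac{2V_1'}{r}-V_1''\Bigr)r^{N-3}|f|^2+2\Bigl[\int_0^R V_1\bigl|(\tfrac{f}{r})'\bigr|^2r^{N-1}-\int_0^R W_1\tfrac{|f|^2}{r^2}r^{N-1}\Bigr]+(\lambda_k-N+1)\int_0^R V_1r^{N-5}|f|^2,
\]
where the three summands are nonnegative by \eqref{Con}, by the Bessel-pair property, and by $\lambda_k\ge N-1$, respectively. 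So your architecture is the right one, but the proof does not close without this second invocation of the Bessel-pair hypothesis at modes $k\ge1$, which your sketch omits.
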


One can use (\ref{Con}) and (\ref{HR}) to derive some Hardy-Rellich type
inequalities. For instance, if we choose $V_{1}=1$ and $W_{1}=\frac{\left(
N-2\right)  ^{2}}{4r^{2}}$, then
\begin{equation}
\int_{\mathbb{R}^{N}}\left\vert \Delta u\right\vert ^{2}dx\geq\frac{N^{2}}%
{4}\int_{\mathbb{R}^{N}}\frac{\left\vert \nabla u\right\vert ^{2}}{\left\vert
x\right\vert ^{2}}dx,\text{ }\forall u\in C_{0}^{\infty}\left(  \mathbb{R}%
^{N}\setminus\left\{  0\right\}  \right)  ,\text{ }N\geq5. \label{HR1}%
\end{equation}
The constant $\frac{N^{2}}{4}$ in the above Hardy-Rellich type inequality
appears as a sum of $\frac{\left(  N-2\right)  ^{2}}{4}$ in the Bessel pair
term, and $\left(  N-1\right)  $ in the second term of (\ref{HR}). Therefore,
the constant $\frac{N^{2}}{4}$ is optimal when $N\geq5$ since $\frac{\left(
N-2\right)  ^{2}}{4}$ is sharp in the $N$-dimensional Bessel pair $\left(
1,\frac{\left(  N-2\right)  ^{2}}{4r^{2}}\right)  $. See, for instance,
\cite{Bec08, TZ07, Yaf99}.

We note that the approach in \cite{GM} makes use of the spherical harmonics
decomposition. Also, without the condition (\ref{Con}), it has been showed
that (\ref{HR}) holds for all radial functions $u$. Therefore, they need to
impose the condition (\ref{Con}) to prevent the symmetry breaking in the
Hardy-Rellich inequality (\ref{HR}). We also mention here that though in
\cite{GM}, the authors mainly studied the Bessel pairs and their applications
in deriving Hardy type inequalities and Hardy-Rellich type inequalities for
positive radial potentials, there are many important Hardy type inequalities
and Hardy-Rellich type inequalities of the forms
\[
\int V\left(  |x|\right)  \left\vert \nabla u\right\vert ^{2}dx\geq\int
W\left(  |x|\right)  \left\vert u\right\vert ^{2}dx
\]
and%
\[
\int V\left(  |x|\right)  \left\vert \Delta u\right\vert ^{2}dx\geq\int
W\left(  |x|\right)  \left\vert \nabla u\right\vert ^{2}dx
\]
in which the weights $W$ are not always nonnegative. Therefore, since the
condition (\ref{Con}) is rather unusual, and because of the assumptions on the
positivity of the weights, there are several Hardy-Rellich type inequalities
that cannot be deduced from Theorem \ref{t1}. Some of the examples that we
have in mind are the second order Heisenberg uncertainty principle and the
second order Hydrogen uncertainty principle (see \cite{CFL23, CFL22, DN21}):%

\begin{align*}
\int_{\mathbb{R}^{N}}|\Delta u|^{2}dx  &  \geq\int_{\mathbb{R}^{N}}\left(
N+2-\left\vert x\right\vert ^{2}\right)  |\nabla u|^{2}dx,\\
\int_{\mathbb{R}^{N}}|\Delta u|^{2}dx  &  \geq\int_{\mathbb{R}^{N}}\left(
\dfrac{N+1}{|x|}-1\right)  |\nabla u|^{2}dx.
\end{align*}

Motivated by this remark, the first goal of this paper is to offer a new way
to employ the notion of Bessel pair to derive new sharp Hardy-Rellich type
inequalities, espcially when the weight $W$ is not always nonnegative. Our
observation is that, for instance, in the Hardy-Rellich inequality
(\ref{HR1}), since $\frac{N^{2}}{4}=\frac{\left(  N+2-2\right)  ^{2}}{4}$, we
can consider $\left(  1,\frac{N^{2}}{4r^{2}}\right)  $ as a $\left(
N+2\right)  $-dimensional Bessel pair. Therefore, in our approach, instead of
using the $N$-dimensional Bessel pair as in Theorem \ref{t1}, we will get
involved with $\left(  N+2\right)  $-dimensional Bessel pairs in deriving the
Hardy-Rellich type inequalities on $\mathbb{R}^{N}$. More precisely, our first
main result can be read as follows:

\begin{theorem}
\label{T2}\textit{Let }$N\geq1$,\textit{ }$0<R\leq\infty$\textit{, }$V_{2}%
\geq0$ and $W_{2}$\textit{ be smooth functions on }$\left(  0,R\right)  $.
Assume that $\left(  V_{2},\left(  W_{2}+N\dfrac{V_{2}^{\prime}}{r}\right)
\right)  $ is a $\left(  N+2\right)  $-dimensional Bessel pair on $\left(
0,R\right)  $. Then

\begin{enumerate}
\item for all radial functions $u\in C_{0}^{\infty}\left(  B_{R}%
\setminus\left\{  0\right\}  \right)  :$
\begin{equation}
\int_{B_{R}}V_{2}(|x|)|\Delta u|^{2}dx\geq\int_{B_{R}}W_{2}(|x|)|\nabla
u|^{2}dx \label{wHR}%
\end{equation}

\item Moreover, if%
\begin{align}
&  2\int_{0}^{R}V_{2}(r)r^{N-3}\left\vert u^{\prime}\right\vert ^{2}%
dr-\int_{0}^{R}V_{2}^{\prime\prime}\left(  r\right)  r^{N-3}\left\vert
u\right\vert ^{2}dr-\left(  N-5\right)  \int_{0}^{R}V_{2}^{\prime}\left(
r\right)  r^{N-4}\left\vert u\right\vert ^{2}dr\label{ConM}\\
&  \hspace{0.2in} +(3N-9)\int_{0}^{R}V_{2}(r)r^{N-5}\left\vert u\right\vert
^{2}dr-\int_{0}^{R}W_{2}(r)r^{N-3}\left\vert u\right\vert ^{2}dr\geq0\text{
}\forall u\in C_{0}^{\infty}\left(  0,R\right)  ,\nonumber
\end{align}
then (\ref{wHR}) holds for all $u\in C_{0}^{\infty}\left(  B_{R}%
\setminus\left\{  0\right\}  \right)  $.
\end{enumerate}
\end{theorem}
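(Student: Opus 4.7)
The overall strategy is to reduce both parts to one-dimensional weighted inequalities that follow from the $(N+2)$-dimensional Bessel pair hypothesis applied to suitable auxiliary functions. For part (1), write $u$ in polar form and set $v(r)=u'(r)$, so that
\begin{align*}
\int_{B_R} V_2(|x|)\,|\Delta u|^2\,dx &= \omega_{N-1}\int_0^R V_2(r)\Bigl(v'+\tfrac{N-1}{r}v\Bigr)^2 r^{N-1}\,dr,\\
\int_{B_R} W_2(|x|)\,|\nabla u|^2\,dx &= \omega_{N-1}\int_0^R W_2(r)\,v^2\,r^{N-1}\,dr.
\end{align*}
Because $u\in C_0^\infty(B_R\setminus\{0\})$, both $v$ and $\tilde v:=v/r$ lie in $C_c^\infty(0,R)$; interpreting $\tilde v$ as a radial function on $\mathbb{R}^{N+2}$, Theorem \ref{t1} (in dimension $N+2$, with the Bessel pair $(V_2,W_2+NV_2'/r)$) yields
\[
\int_0^R V_2(\tilde v')^2 r^{N+1}\,dr \;\geq\; \int_0^R \Bigl(W_2+N\tfrac{V_2'}{r}\Bigr)\tilde v^{\,2}\,r^{N+1}\,dr.
\]
Substituting $\tilde v'=v'/r-v/r^2$ and $\tilde v^{\,2}r^{N+1}=v^2 r^{N-1}$ and then integrating the resulting cross term $-2\int V_2\,vv'r^{N-2}dr$ by parts collapses this to exactly (\ref{wHR}).

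For part (2), the plan is to use the spherical harmonic decomposition $u(r,\omega)=\sum_{k\geq 0}a_k(r)Y_k(\omega)$, where $\{Y_k\}$ is an $L^2$-orthonormal basis of eigenfunctions of $-\Delta_{S^{N-1}}$ with eigenvalues $\mu_k=k(k+N-2)$. Orthonormality diagonalises both sides of (\ref{wHR}), reducing matters to showing, for every $k\geq 0$,
\[
\int_0^R V_2\Bigl(a_k''+\tfrac{N-1}{r}a_k'-\tfrac{\mu_k}{r^2}a_k\Bigr)^2 r^{N-1}\,dr \;\geq\; \int_0^R W_2\Bigl((a_k')^2+\tfrac{\mu_k}{r^2}a_k^2\Bigr)r^{N-1}\,dr.
\]
The $k=0$ case ($\mu_0=0$) is just part (1) applied to $a_0$. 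For $k\geq 1$, expand the square on the left and split off the $(a_k''+\frac{N-1}{r}a_k')^2$ piece, which is controlled by $\int W_2(a_k')^2 r^{N-1}dr$ via part (1). After several integrations by parts, the remaining cross term coming from $-2\frac{\mu_k}{r^2}a_k(a_k''+\frac{N-1}{r}a_k')$ and the pointwise $\mu_k^2 a_k^2/r^4$ contribution assemble into
\[
\mu_k\Bigl\{\mathcal{E}(a_k)+\bigl[\mu_k-(N-1)\bigr]\int_0^R V_2\,r^{N-5}a_k^2\,dr\Bigr\},
\]
where $\mathcal{E}(a_k)$ is exactly the left-hand side of (\ref{ConM}) with $u\mapsto a_k$. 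Since $V_2\geq 0$ and $\mu_k\geq\mu_1=N-1$ for every $k\geq 1$, the bracketed bonus is nonnegative, so (\ref{ConM}) is enough.

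The main obstacle is the algebraic bookkeeping in part (2): one must perform several integrations by parts on terms carrying weights $r^{N-2},r^{N-3},r^{N-4},r^{N-5}$ and track how the mixed derivative terms reassemble. The decisive identification is that the coefficient of $\int V_2 r^{N-5}a_k^2\,dr$ in the residue comes out to $\mu_k\bigl(2(N-4)+\mu_k\bigr)$; evaluated at the spectral gap $\mu_1=N-1$ this gives precisely the constant $3N-9=2(N-4)+(N-1)$ appearing in (\ref{ConM}). This matching is what makes (\ref{ConM}) the exactly sufficient symmetrisation condition for lifting the radial Hardy-Rellich inequality to all of $C_0^\infty(B_R\setminus\{0\})$.
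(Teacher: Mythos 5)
Your proposal is correct and follows essentially the same route as the paper: part (1) is obtained by feeding $u'/r$ into the $(N+2)$-dimensional Bessel-pair Hardy inequality and integrating the cross term by parts, and part (2) proceeds by spherical harmonics, with each mode splitting into the radial piece from part (1) plus $c_k$ times the left-hand side of (\ref{ConM}) plus the nonnegative surplus $c_k\bigl(c_k-(N-1)\bigr)\int_0^R V_2\,r^{N-5}|u_k|^2\,dr$. The identification $3N-9=2(N-4)+(N-1)$ you single out is exactly the grouping the paper uses.
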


Our method of proving Theorem \ref{T2} is spherical harmonics decomposition.
Therefore, we also need to impose the condition (\ref{ConM}) to prevent
symmetry breaking phenomenon. We also note that the constraint (\ref{ConM}) is
rather artificial. Nevertheless, under the assumption that $\left(
V_{2},\left(  W_{2}+N\dfrac{V_{2}^{\prime}}{r}\right)  \right)  $ is a
$\left(  N+2\right)  $-dimensional Bessel pair on $\left(  0,R\right)  $, we
can replace (\ref{ConM}) with a much more natural condition%
\begin{equation}
V_{2}^{\prime\prime}(r)-3\dfrac{V_{2}^{\prime}(r)}{r}-\left(  N-5\right)
\dfrac{V_{2}(r)}{r^{2}}\leq0\text{ on }\left(  0,R\right)  . \label{Con2}%
\end{equation}
We note that this condition only involves the weight function $V_{2}$ rather
than the entire pair $(V_{2}, W_{2})$, which was required in \eqref{Con} of
Theorem \ref{T1}.

Indeed, by showing that condition \eqref{Con2} implies \eqref{ConM}, we have
the following

\begin{theorem}
\label{T2a}\textit{Let }$N\geq1$,\textit{ }$0<R\leq\infty$\textit{, }%
$V_{2}\geq0$ and $W_{2}$\textit{ be smooth functions on }$\left(  0,R\right)
$. Assume that $\left(  V_{2},\left(  W_{2}+N\dfrac{V_{2}^{\prime}}{r}\right)
\right)  $ is a $\left(  N+2\right)  $-dimensional Bessel pair on $\left(
0,R\right)  $. If $V_{2}(x)$ satisfies \eqref{Con2}, then for all functions
$u\in C_{0}^{\infty}\left(  B_{R}\setminus\left\{  0\right\}  \right)  :$
\begin{equation}
\label{weights}\int_{B_{R}}V_{2}(|x|)|\Delta u|^{2}dx\geq\int_{B_{R}}%
W_{2}(|x|)|\nabla u|^{2}dx.
\end{equation}

\end{theorem}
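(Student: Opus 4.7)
\textbf{Proof proposal for Theorem \ref{T2a}.}
The plan is to deduce Theorem \ref{T2a} directly from part (2) of Theorem \ref{T2}; what needs to be verified is that the pointwise condition \eqref{Con2} on $V_{2}$ alone implies the one-dimensional integral condition \eqref{ConM} for every $u\in C_{0}^{\infty}(0,R)$.

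The key step is to eliminate the $W_{2}$ term in \eqref{ConM} using the Bessel pair hypothesis. Since $(V_{2},W_{2}+NV_{2}^{\prime}/r)$ is an $(N+2)$-dimensional Bessel pair, the weighted Hardy inequality \eqref{wH} read in dimension $N+2$ and specialized to radial test functions yields
\begin{equation*}
\int_{0}^{R}V_{2}|f'|^{2}r^{N+1}\,dr \;\geq\; \int_{0}^{R}\bigl(W_{2}+NV_{2}^{\prime}/r\bigr)|f|^{2}r^{N+1}\,dr
\end{equation*}
for every $f\in C_{0}^{\infty}(0,R)$. The natural choice is $f=u/r^{2}$, which brings the weight $r^{N+1}$ down by four powers to exactly the $r^{N-3}$ appearing in \eqref{ConM}. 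After expanding $(u/r^{2})'=u'/r^{2}-2u/r^{3}$ and integrating by parts once on the resulting cross term $\int V_{2}u u' r^{N-4}\,dr$, the estimate rearranges to
\begin{equation*}
\int_{0}^{R}V_{2}(u')^{2}r^{N-3}\,dr \;\geq\; \int_{0}^{R}W_{2}u^{2}r^{N-3}\,dr + (N-2)\int_{0}^{R}V_{2}^{\prime}u^{2}r^{N-4}\,dr - 2(N-2)\int_{0}^{R}V_{2}u^{2}r^{N-5}\,dr.
\end{equation*}

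Writing $2\int V_{2}(u')^{2}r^{N-3}\,dr$ in \eqref{ConM} as the sum of two equal copies and bounding one of them below by the display above cancels the $W_{2}$ contributions. After collecting coefficients of $\int V_{2}^{\prime}u^{2}r^{N-4}\,dr$ (yielding $(N-2)-(N-5)=3$) and of $\int V_{2}u^{2}r^{N-5}\,dr$ (yielding $-2(N-2)+(3N-9)=N-5$), condition \eqref{ConM} is reduced to
\begin{equation*}
\int_{0}^{R}V_{2}(u')^{2}r^{N-3}\,dr \;\geq\; \int_{0}^{R}\left[V_{2}^{\prime\prime}-3\frac{V_{2}^{\prime}}{r}-(N-5)\frac{V_{2}}{r^{2}}\right]u^{2}r^{N-3}\,dr.
\end{equation*}
The hypothesis $V_{2}\geq 0$ makes the left-hand side nonnegative, while condition \eqref{Con2} renders the bracketed expression on the right pointwise nonpositive, so the inequality holds trivially. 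I do not anticipate any real obstacle beyond the coefficient bookkeeping after the substitution $f=u/r^{2}$; the conceptual content is simply that the $NV_{2}^{\prime}/r$ correction built into the $(N+2)$-dimensional Bessel pair is exactly the adjustment needed to convert the artificial integral condition \eqref{ConM} into the clean pointwise condition \eqref{Con2}.
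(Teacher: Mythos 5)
Your proposal is correct and follows essentially the same route as the paper: the substitution $f=u/r^{2}$ is the paper's $u=r^{2}v$, the integration by parts and resulting inequality $\int V_{2}|u'|^{2}r^{N-3}\geq\int W_{2}u^{2}r^{N-3}+(N-2)\int V_{2}'u^{2}r^{N-4}-2(N-2)\int V_{2}u^{2}r^{N-5}$ match the paper's computation, and splitting $2\int V_{2}|u'|^{2}r^{N-3}$ into two copies to cancel $W_{2}$ and reduce \eqref{ConM} to the pointwise condition \eqref{Con2} is exactly the paper's two-bracket decomposition. All coefficient bookkeeping checks out.
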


Actually, we will show that we can get the same result as the one by Ghoussoub
and Moradifam in \cite{GM} (Theorem \ref{t1}) with our condition (\ref{Con2}).
More precisely, we have

\begin{theorem}
\label{C1}\textit{Let }$N\geq1$,\textit{ }$0<R\leq\infty$\textit{, }$V_{1}%
\geq0$ and $W_{1}$\textit{ be smooth functions on }$\left(  0,R\right)  $.
Assume that $\left(  V_{1},W_{1}\right)  $ is a $N$-dimensional Bessel pair on
$\left(  0,R\right)  $. If $V_{1}(x)$ satisfies \eqref{Con2}, then for all
$u\in C_{0}^{\infty}\left(  B_{R}\setminus\left\{  0\right\}  \right)  $:
\begin{align}
\label{weights2}\int_{B_{R}}V_{1}\left(  \left\vert x\right\vert \right)
\left\vert \Delta u\right\vert ^{2}dx  &  \geq\int_{B_{R}}W_{1}\left(
\left\vert x\right\vert \right)  \left\vert \nabla u\right\vert ^{2}%
dx\nonumber\\
&  \hspace{0.2in} +\left(  N-1\right)  \int_{B_{R}}\left(  \frac{V_{1}\left(
\left\vert x\right\vert \right)  }{\left\vert x\right\vert ^{2}}-\frac
{V_{1}^{\prime}\left(  \left\vert x\right\vert \right)  }{\left\vert
x\right\vert }\right)  \left\vert \nabla u\right\vert ^{2}dx.
\end{align}

\end{theorem}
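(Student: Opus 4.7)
The plan is to reduce Theorem \ref{C1} to Theorem \ref{T2a} by a change of weights that converts the given $N$-dimensional Bessel pair into an $(N+2)$-dimensional one. Given $(V_1,W_1)$, I set
\[
V_2 := V_1, \qquad W_2 := W_1 + (N-1)\left(\frac{V_1}{r^2} - \frac{V_1'}{r}\right),
\]
so that the inequality \eqref{weights} for $(V_2,W_2)$ reads literally as \eqref{weights2}. The hypothesis \eqref{Con2} transfers directly to $V_2 = V_1$, so it remains only to verify that $\bigl(V_2,\, W_2 + N V_2'/r\bigr)$ is an $(N+2)$-dimensional Bessel pair.

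Adding $N V_2'/r$ to $W_2$, the combined weight becomes
\[
\widetilde W := W_1 + (N-1)\frac{V_1}{r^2} + \frac{V_1'}{r}.
\]
The key step is to produce a positive $\psi$ on $(0,R)$ satisfying $(r^{N+1}V_1\psi')' + r^{N+1}\widetilde W\psi = 0$. Since $(V_1,W_1)$ is an $N$-dimensional Bessel pair, there is a positive $\varphi$ with $(r^{N-1}V_1\varphi')' + r^{N-1}W_1\varphi = 0$, equivalently
\[
V_1\varphi'' + V_1'\varphi' + \frac{N-1}{r}V_1\varphi' + W_1\varphi = 0.
\]
The natural ansatz is $\psi(r) := \varphi(r)/r$, which is positive on $(0,R)$.

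To finish, I would substitute $\psi = \varphi/r$ (with $\psi' = \varphi'/r - \varphi/r^2$ and $\psi'' = \varphi''/r - 2\varphi'/r^2 + 2\varphi/r^3$) into the expanded $(N+2)$-dimensional Bessel equation
\[
V_1\psi'' + V_1'\psi' + \frac{N+1}{r}V_1\psi' + \widetilde W\psi = 0.
\]
After multiplying through by $r$, the ``second-derivative part'' reassembles to $V_1\varphi'' + V_1'\varphi' + (N-1)V_1\varphi'/r$, which equals $-W_1\varphi$ by the $N$-dimensional Bessel equation, while the remaining lower-order terms collect to $-(N-1)V_1\varphi/r^2 - V_1'\varphi/r + \widetilde W\varphi$. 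The whole expression vanishes precisely when $\widetilde W = W_1 + (N-1)V_1/r^2 + V_1'/r$, which matches our definition. Hence $\bigl(V_2,\, W_2 + NV_2'/r\bigr)$ is an $(N+2)$-dimensional Bessel pair, and Theorem \ref{T2a} delivers \eqref{weights2}.

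The only real obstacle is the algebraic bookkeeping in this substitution step: several $\varphi$-terms of different orders in $1/r$ must be combined carefully and the $N$-dimensional Bessel equation invoked at exactly the right moment so that the residuals $\varphi/r$ and $\varphi/r^2$ reproduce the advertised correction $(N-1)V_1/r^2 + V_1'/r$. Once this short calculation is in hand, no further analytic input is needed beyond Theorem \ref{T2a}.
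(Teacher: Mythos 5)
Your proposal is correct and follows essentially the same route as the paper: both define the shifted weight $W_2 = W_1 + (N-1)(V_1/r^2 - V_1'/r)$, use the substitution $\varphi = r\psi$ (your $\psi = \varphi/r$) to convert the $N$-dimensional Bessel equation into the $(N+2)$-dimensional one with combined weight $W_1 + (N-1)V_1/r^2 + V_1'/r$, and then invoke Theorem \ref{T2a}. The algebraic bookkeeping you outline checks out exactly as you describe.
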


\begin{remark}
We note that unlike the condition \eqref{Con} in Theorem \ref{t1}, our
condition \eqref{Con2} in Theorem \ref{C1} only involves the weight function
$V_{1}$. Therefore, in many applications, our Theorem \ref{C1} is an
improvement of Theorem \ref{t1}, and can be used to derive several
Hardy-Rellich type inequalities that can not be deduced from Theorem \ref{t1}.
In particular, the condition (\ref{Con2}) is naturally verified in the
standard case $V_{1}=1$ and $N\geq5$ without any extra condition on the
potential $W_{1}$. Note that in this case $V_{1}=1$, the condition (\ref{Con})
in Theorem \ref{t1} becomes $W_{1}-\frac{2}{r^{2}}\geq0$ on $\left(
0,R\right)  $. This implies that the weight $W_{1}$ must be strictly positive.
However, our Theorem \eqref{C1} does not require $W_{1}$ to be nonnegative.
\end{remark}

\medskip

In this situation, we further obtain as applications of Theorem \ref{T2a} and
Theorem \ref{C1} the following result which is of its independent interest.

\begin{corollary}
\label{T1}\textit{Let }$N\geq5$,\textit{ }$0<R\leq\infty$\textit{, }$W_{2}%
$\textit{ be a smooth function on }$\left(  0,R\right)  $. Then

\begin{enumerate}
\item If $\left(  1,W_{2}\right)  $ is a $\left(  N+2\right)  $-dimensional
Bessel pair on $\left(  0,R\right)  $, then for all $u\in C_{0}^{\infty
}\left(  B_{R}\setminus\left\{  0\right\}  \right)  $:
\[
\int_{B_{R}}|\Delta u|^{2}dx\geq\int_{B_{R}}W_{2}(|x|)|\nabla u|^{2}dx.
\]

\item If $\left(  1,W_{2}\right)  $ is a $N$-dimensional Bessel pair on
$\left(  0,R\right)  $, then for all $u\in C_{0}^{\infty}\left(
B_{R}\setminus\left\{  0\right\}  \right)  $:
\begin{equation}
\int_{B_{R}}\left\vert \Delta u\right\vert ^{2}dx\geq\int_{B_{R}}W_{2}\left(
\left\vert x\right\vert \right)  \left\vert \nabla u\right\vert ^{2}dx+\left(
N-1\right)  \int_{B_{R}}\frac{\left\vert \nabla u\right\vert ^{2}}{\left\vert
x\right\vert ^{2}}dx. \label{wwHR}%
\end{equation}

\end{enumerate}
\end{corollary}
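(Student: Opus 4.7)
The plan is to derive both parts as immediate specializations of the preceding theorems to the weight $V \equiv 1$, the key observation being that the structural constraint (\ref{Con2}) degenerates to the dimension condition $N \geq 5$ in this case. There is therefore no real obstacle to overcome here; the statement is a consumer-friendly packaging of Theorem \ref{T2a} and Theorem \ref{C1}.

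For Part (1), I would set $V_2 \equiv 1$ in Theorem \ref{T2a}. Since $V_2' = 0$, the hypothesis that $\bigl(V_2,\, W_2 + N V_2'/r\bigr)$ is an $(N+2)$-dimensional Bessel pair reduces exactly to the assumption that $(1, W_2)$ is an $(N+2)$-dimensional Bessel pair, matching the hypothesis of the corollary. The condition (\ref{Con2}), namely $V_2''(r) - 3 V_2'(r)/r - (N-5) V_2(r)/r^2 \leq 0$, collapses to $-(N-5)/r^2 \leq 0$, which holds precisely because $N \geq 5$. Theorem \ref{T2a} then yields the desired inequality directly.

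For Part (2), I would apply Theorem \ref{C1} with $V_1 \equiv 1$ and $W_1 = W_2$. Again the Bessel pair hypothesis matches verbatim, and (\ref{Con2}) holds by the same reduction. Specializing the conclusion of Theorem \ref{C1} to $V_1 = 1$, $V_1' = 0$, the correction term
\[
(N-1) \int_{B_R} \left(\frac{V_1(|x|)}{|x|^2} - \frac{V_1'(|x|)}{|x|}\right) |\nabla u|^2 \, dx
\]
collapses to $(N-1) \int_{B_R} |\nabla u|^2 / |x|^2 \, dx$, producing exactly (\ref{wwHR}).

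The only remark worth making is a conceptual one: the threshold $N \geq 5$ appearing in the hypothesis is not imposed artificially but emerges automatically from the requirement (\ref{Con2}) once $V \equiv 1$, which is the same dimensional threshold familiar from the sharp inequality (\ref{HR1}). In other words, the previously derived framework already encodes the correct dimensional restriction, and no additional argument is needed beyond checking the two trivial reductions above.
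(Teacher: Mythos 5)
Your proposal is correct and is essentially the paper's own derivation: the corollary is introduced there precisely as an application of Theorem \ref{T2a} (part 1) and Theorem \ref{C1} (part 2) with $V\equiv 1$, where condition \eqref{Con2} reduces to $-(N-5)/r^{2}\leq 0$, i.e.\ $N\geq 5$. The paper additionally writes out a direct spherical-harmonics proof of part 1 only ``to illustrate the main ideas,'' while explicitly acknowledging that it is a special case of the general theorems, so no further argument is needed beyond the reductions you give.
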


As a consequence of our main results, we can derive as many Hardy-Rellich type
inequalities as we can form Bessel pairs. For instance, we have the following
Hardy-Rellich inequalities:

\begin{corollary}
\textit{Let }$N\geq5$ and\textit{ }$R>0$. Then

\begin{itemize}
\item[a)] (Hardy-Rellich inequality) For all $u\in C_{0}^{\infty}\left(
\mathbb{R}^{N}\right)  :$%
\[
\int_{\mathbb{R}^{N}}|\Delta u|^{2}dx\geq\dfrac{N^{2}}{4}\int_{\mathbb{R}^{N}%
}\dfrac{|\nabla u|^{2}}{|x|^{2}}dx.
\]

\item[b)] For all $u\in C_{0}^{\infty}\left(  B_{R}\right)  :$%
\[
\int_{B_{R}}|\Delta u|^{2}dx\geq\frac{N^{2}}{4}\int_{B_{R}}\frac{|\nabla
u|^{2}}{|x|^{2}\left(  1-\left(  \frac{R}{|x|}\right)  ^{-N}\right)  ^{2}}dx.
\]

\item[c)] (Hardy-Rellich inequality of Brezis-V\'{a}zquez type) For all $u\in
C_{0}^{\infty}\left(  B_{R}\right)  :$%
\[
\int_{B_{R}}|\Delta u|^{2}dx\geq\frac{N^{2}}{4}\int_{B_{R}}\frac{|\nabla
u|^{2}}{|x|^{2}}dx+\frac{z_{0}^{2}}{R^{2}}\int_{B_{R}}|\nabla u|^{2}dx.
\]

\end{itemize}
\end{corollary}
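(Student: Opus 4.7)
The plan is to apply part~(1) of Corollary~\ref{T1} to each of the three weights
\[
W_2^{(a)}(r)=\frac{N^2}{4r^2},\qquad W_2^{(b)}(r)=\frac{N^2}{4r^2\bigl(1-(r/R)^N\bigr)^2},\qquad W_2^{(c)}(r)=\frac{N^2}{4r^2}+\frac{z_0^2}{R^2}.
\]
Since $V_2=1$ throughout, the auxiliary hypothesis \eqref{Con2} of Theorem~\ref{T2a} degenerates to $-(N-5)/r^2\le 0$, which is automatic from $N\ge 5$. Hence the only task remaining is to exhibit, for each weight, a positive smooth function $\varphi$ on $(0,R)$ (with $R=\infty$ in case~(a)) solving
\[
(r^{N+1}\varphi')'+r^{N+1}W_2(r)\varphi=0.
\]

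A convenient uniform ansatz is $\varphi(r)=r^{-N/2}\psi(r)$. A short computation reduces the Bessel equation to the scalar ODE
\[
\psi''+\frac{\psi'}{r}+\Bigl(W_2(r)-\frac{N^2}{4r^2}\Bigr)\psi=0,
\]
so the problem boils down to solving this ODE and verifying $\psi>0$ on $(0,R)$. For (a) I take $\psi\equiv 1$; the bracketed term vanishes exactly when $W_2=N^2/(4r^2)$, and $\varphi=r^{-N/2}$ is positive on $(0,\infty)$. For (c) I take $\psi(r)=J_0(z_0 r/R)$: the rescaling $s=z_0 r/R$ turns the ODE into the standard Bessel equation of order zero with $W_2-N^2/(4r^2)=(z_0/R)^2$, and the choice of $z_0$ as the \emph{first} positive zero of $J_0$ secures $\psi>0$ on $(0,R)$.

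Case~(b) is the only nontrivial guess. Motivated by Brezis--V\'azquez type improvements where weights of the form $(1-(r/R)^N)^{-2}$ arise, I will try $\psi(r)=\bigl(1-(r/R)^N\bigr)^{1/2}$. The expected algebraic miracle is that the cross-terms in $\varphi'$ telescope to
\[
\varphi'(r)=-\frac{N}{2}\,r^{-N/2-1}\bigl(1-(r/R)^N\bigr)^{-1/2},
\]
so that one further differentiation gives $(r^{N+1}\varphi')'=-\tfrac{N^2}{4}\,r^{N/2-1}\bigl(1-(r/R)^N\bigr)^{-3/2}$, matching $-r^{N+1}W_2^{(b)}\varphi$ identically on $(0,R)$ and closing the Bessel equation. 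Positivity of $\varphi$ on $(0,R)$ is manifest.

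The only real obstacle is algebraic: spotting the correct ansatz in~(b); after that the verification is mechanical. Once the three Bessel pairs are in place, Corollary~\ref{T1}(1) delivers the stated inequalities for all $u\in C_0^\infty(B_R\setminus\{0\})$, and the standard fact that a point has zero $H^2$-capacity in dimension $N\ge 5$ extends each inequality to the full test-function class $C_0^\infty(\mathbb{R}^N)$ in (a) and $C_0^\infty(B_R)$ in (b), (c).
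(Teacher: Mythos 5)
Your proposal is correct and follows essentially the same route as the paper: the paper's proof consists of the single assertion that the three pairs are $(N+2)$-dimensional Bessel pairs on $(0,\infty)$ resp.\ $(0,R)$ and then invokes Corollary \ref{T1}(1), while you additionally carry out the verification by exhibiting the explicit positive solutions $\varphi=r^{-N/2}$, $\varphi=r^{-N/2}\bigl(1-(r/R)^N\bigr)^{1/2}$ and $\varphi=r^{-N/2}J_0(z_0r/R)$, all of which check out. The passage from $C_0^\infty(B_R\setminus\{0\})$ to $C_0^\infty(B_R)$, which you handle via the zero capacity of a point for $N\ge 5$, is left implicit in the paper.
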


\begin{proof}
$\left(  1,\dfrac{N^{2}}{4}\dfrac{1}{r^{2}}\right)  $ is a $\left(
N+2\right)  $-dimensional Bessel pairs on $\left(  0,\infty\right)  $,
$\left(  1,\frac{N^{2}}{4}\frac{1}{r^{2}\left(  1-\left(  \frac{R}{r}\right)
^{-N}\right)  ^{2}}\right)  $ is a $\left(  N+2\right)  $-dimensional Bessel
pairs on $\left(  0,R\right)  $, and $\left(  1,\frac{N^{2}}{4}\frac{1}{r^{2}%
}+\frac{z_{0}^{2}}{R^{2}}\right)  $ is a $\left(  N+2\right)  $-dimensional
Bessel pairs on $\left(  0,R\right)  .$
\end{proof}

Since we can obtain the Hardy-Rellich type inequalities (\ref{wwHR}) without
the extra condition that $W_{2}-\frac{2}{r^{2}}\geq0$ on $\left(  0,R\right)
$ as in \cite{GM}, we can derive several new Hardy-Rellich type inequalities
that can not be attained by Theorem \ref{t1}, especially when the potential
$W_{2}$ is not always nonnegative. For instance, we obtain the following
Hardy-Rellich type inequalities

\begin{corollary}
\label{C2}For all $u\in C_{0}^{\infty}(\mathbb{R}^{N})$, $N\geq5$, we have

\begin{itemize}
\item[a)]
\[
\int_{\mathbb{R}^{N}}|\Delta u|^{2}dx\geq\int_{\mathbb{R}^{N}}\left(
N+2-|x|^{2}\right)  |\nabla u|^{2}dx.
\]

\item[b)]
\[
\int_{\mathbb{R}^{N}}|\Delta u|^{2}dx\geq\int_{\mathbb{R}^{N}}\left(
\dfrac{N+1}{|x|}-1\right)  |\nabla u|^{2}dx.
\]

\item[c)] for $b<1:$%
\[
\int_{\mathbb{R}^{N}}|\Delta u|^{2}dx\geq\int_{\mathbb{R}^{N}}\left(
\frac{N+1-b}{|x|^{b+1}}-\frac{1}{|x|^{2b}}\right)  |\nabla u|^{2}dx.
\]

\item[d)] for $b>1:$%
\[
\int_{\mathbb{R}^{N}}|\Delta u|^{2}dx\geq\int_{\mathbb{R}^{N}}\left(
\frac{N+b-1}{|x|^{b+1}}-\frac{1}{|x|^{2b}}\right)  |\nabla u|^{2}dx.
\]

\end{itemize}
\end{corollary}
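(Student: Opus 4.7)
The plan is to apply Corollary~\ref{T1}(1) with $R=\infty$ in each of the four sub-cases. Since $V_2\equiv 1$ has vanishing first and second derivatives, condition \eqref{Con2} collapses to $-(N-5)/r^{2}\leq 0$, which is automatic under the hypothesis $N\geq 5$, so the task reduces to exhibiting, for each weight $W_2$ appearing in (a)--(d), a positive smooth $\varphi$ on $(0,\infty)$ satisfying $(r^{N+1}\varphi')' + r^{N+1}W_2\,\varphi = 0$, i.e.\ verifying that $(1,W_2)$ is an $(N+2)$-dimensional Bessel pair. Once this is in hand, a standard cutoff argument near the origin passes the inequality from $C_0^{\infty}(\mathbb{R}^N\setminus\{0\})$ to $C_0^{\infty}(\mathbb{R}^N)$ (with the convention that the inequality is trivial if the right-hand side equals $-\infty$).

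Parts~(a), (b), and (c) can be handled uniformly, since setting $b=-1$ and $b=0$ in the weight $(N+1-b)/|x|^{b+1} - 1/|x|^{2b}$ of (c) recovers (a) and (b) respectively. For any $b<1$ I would take the ansatz $\varphi(r) = \exp(-r^{1-b}/(1-b))$, which is positive on $(0,\infty)$ because $1-b>0$. A single differentiation of $r^{N+1}\varphi' = -r^{N+1-b}\varphi$ produces exactly the two power-of-$r$ terms needed to match $-r^{N+1}W_2\varphi$ with no leftover, confirming the Bessel pair and yielding (a), (b), (c) simultaneously.

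The genuine obstacle is part~(d). The analogous exponential $\varphi = \exp(-r^{1-b}/(b-1))$ for $b>1$ delivers the weight $(b-N-1)/r^{b+1} - 1/r^{2b}$ rather than the desired $(N+b-1)/r^{b+1} - 1/r^{2b}$: the contribution from the $(N+1)/r$ drift in the Bessel operator enters with the opposite sign to what is required, and a pure exponential cannot absorb it. To correct this I would broaden the ansatz to $\varphi(r) = r^{\alpha}\exp(\psi(r))$ with $\psi'(r)=c\,r^{-b}$, expand the Bessel equation in the form $\psi'' + (\psi')^{2} + (2\alpha + N+1)r^{-1}\psi' + \alpha(\alpha+N)/r^{2} + W_2 = 0$, and match coefficients of $r^{-2b}$, $r^{-b-1}$, and $r^{-2}$: the first forces $c=\pm 1$, the second combined with the requirement on the $r^{-b-1}$ coefficient gives $c=1$ and $\alpha = -N$, and the $r^{-2}$ balance $\alpha(\alpha+N)=0$ is automatically consistent with $\alpha=-N$. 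The resulting candidate $\varphi(r) = r^{-N}\exp(-r^{1-b}/(b-1))$ is positive and smooth on $(0,\infty)$, and direct substitution verifies the Bessel equation, completing (d). The conceptual content is the realization that in case~(d) the exponential alone cannot carry the $(N+1)/r$ first-order term and the Euler factor $r^{-N}$ must be brought in.
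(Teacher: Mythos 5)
Your proposal is correct and follows the paper's own route: the paper's proof is exactly the one-line application of Corollary \ref{T1}(1) to the four $(N+2)$-dimensional Bessel pairs $\left(1,N+2-r^{2}\right)$, $\left(1,\tfrac{N+1}{r}-1\right)$, $\left(1,\tfrac{N+1-b}{r^{b+1}}-\tfrac{1}{r^{2b}}\right)$ for $b<1$, and $\left(1,\tfrac{N+b-1}{r^{b+1}}-\tfrac{1}{r^{2b}}\right)$ for $b>1$. The only difference is that you actually exhibit the positive solutions $\varphi=\exp\bigl(-r^{1-b}/(1-b)\bigr)$ and $\varphi=r^{-N}\exp\bigl(-r^{1-b}/(b-1)\bigr)$ of $(r^{N+1}\varphi')'+r^{N+1}W_{2}\varphi=0$ (both check out), a verification the paper asserts without proof.
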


\begin{proof}
We apply Theorem \ref{T1} to the following $\left(  N+2\right)  $-dimensional
Bessel pairs on $\left(  0,\infty\right)  $: $\left(  1,N+2-r^{2}\right)
$,\ $\left(  1,\dfrac{N+1}{r}-1\right)  $, $\left(  1,\dfrac{N+1-b}{r^{b+1}%
}-\frac{1}{r^{2b}}\right)  $ for $b<1$, and $\left(  1,\dfrac{N+b-1}{r^{b+1}%
}-\frac{1}{r^{2b}}\right)  $ for $b>1$, respectively.
\end{proof}

Obviously, using the standard scaling argument, we can also deduce the
following Caffarelli-Kohn-Nirenberg inequalities from Corollary \ref{C2}:

\begin{corollary}
For all $u\in C_{0}^{\infty}(\mathbb{R}^{N})$, $N\geq5$, we have

\begin{itemize}
\item[a)] {(Second order Heisenberg Uncertainty Principle)}
\[
\left(  \int_{\mathbb{R}^{N}}|\Delta u|^{2}dx\right)  ^{\frac{1}{2}}\left(
\int_{\mathbb{R}^{N}}|x|^{2}|\nabla u|^{2}dx\right)  ^{\frac{1}{2}}\geq
\frac{N+2}{2}\int_{\mathbb{R}^{N}}|\nabla u|^{2}dx.
\]

\item[b)] {(Second order Hydrogen Uncertainty Principle)}
\[
\left(  \int_{\mathbb{R}^{N}}|\Delta u|^{2}dx\right)  ^{\frac{1}{2}}\left(
\int_{\mathbb{R}^{N}}|\nabla u|^{2}dx\right)  ^{\frac{1}{2}}\geq\frac{N+1}%
{2}\int_{\mathbb{R}^{N}}\frac{|\nabla u|^{2}}{|x|}dx.
\]

\item[c)] for $b<1:$%
\[
\left(  \int_{\mathbb{R}^{N}}|\Delta u|^{2}dx\right)  ^{\frac{1}{2}}\left(
\int_{\mathbb{R}^{N}}\frac{|\nabla u|^{2}}{|x|^{2b}}dx\right)  ^{\frac{1}{2}%
}\geq\frac{N+1-b}{2}\int_{\mathbb{R}^{N}}\frac{|\nabla u|^{2}}{|x|^{b+1}}dx.
\]

\item[d)] for $b>1:$%
\[
\left(  \int_{\mathbb{R}^{N}}|\Delta u|^{2}dx\right)  ^{\frac{1}{2}}\left(
\int_{\mathbb{R}^{N}}\frac{|\nabla u|^{2}}{|x|^{2b}}dx\right)  ^{\frac{1}{2}%
}\geq\frac{N+b-1}{2}\int_{\mathbb{R}^{N}}\frac{|\nabla u|^{2}}{|x|^{b+1}}dx.
\]

\end{itemize}
\end{corollary}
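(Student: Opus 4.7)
The plan is to optimize each inequality from Corollary \ref{C2} over the one-parameter scaling family $u_\lambda(x) := u(\lambda x)$, $\lambda > 0$. The inequalities in Corollary \ref{C2} all have the schematic form $\int|\Delta u|^2 \geq \int W_+(|x|)|\nabla u|^2 - \int W_-(|x|)|\nabla u|^2$, where $W_+$ and $W_-$ have different degrees of homogeneity in $|x|$. After moving the negative term to the left, one obtains a sum of two nonnegative functionals bounded below by a third; applying the inequality to $u_\lambda$ and minimizing over $\lambda$ then produces the Cauchy-Schwarz-type (uncertainty) statement.

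I would carry out part (a) in detail. Rewrite Corollary \ref{C2}(a) as
\[
\int_{\mathbb{R}^N} |\Delta u|^2\,dx + \int_{\mathbb{R}^N} |x|^2|\nabla u|^2\,dx \geq (N+2)\int_{\mathbb{R}^N} |\nabla u|^2\,dx.
\]
Applying this with $u$ replaced by $u_\lambda$ and changing variables $y = \lambda x$, one computes the homogeneities
\[
\int|\Delta u_\lambda|^2 = \lambda^{4-N} A,\qquad \int |x|^2|\nabla u_\lambda|^2 = \lambda^{-N} B,\qquad \int|\nabla u_\lambda|^2 = \lambda^{2-N} C,
\]
where $A$, $B$, $C$ denote the three corresponding integrals of $u$. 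Multiplying through by $\lambda^N$ and setting $t := \lambda^2 > 0$ gives $t^2 A + B \geq (N+2)\,t\,C$ for every $t > 0$; minimizing the left-hand side at $t = (N+2)C/(2A)$ yields $4AB \geq (N+2)^2 C^2$, which is precisely (a).

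Parts (b), (c), (d) follow from exactly the same template, applied to the corresponding inequalities in Corollary \ref{C2}. After bringing the negative term to the left, each resulting inequality has the schematic form $\lambda^\alpha \mathcal{A} + \lambda^\beta \mathcal{B} \geq c\,\lambda^\gamma\,\mathcal{C}$, where the homogeneity exponents satisfy the midpoint relation $2\gamma = \alpha + \beta$; for instance in (b) one has $(\alpha,\beta,\gamma) = (4-N,\,2-N,\,3-N)$. AM-GM on the left-hand side (equivalently, optimization in $\lambda$) then gives $2\sqrt{\mathcal{A}\mathcal{B}} \geq c\,\mathcal{C}$, with the constant $c$ equal to $N+1$, $N+1-b$, $N+b-1$ respectively, producing exactly the stated bounds after dividing by $2$.

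The argument poses no genuine obstacle: the only point requiring care is the verification, case by case, of the midpoint relation $2\gamma = \alpha+\beta$ on the scaling exponents, which is what guarantees that the optimum in $\lambda$ is attained at an interior critical point and that the minimization converts the additive inequality into a multiplicative one with the sharp constant. This relation holds automatically because in each line of Corollary \ref{C2} the weights $W_+$ and $W_-$ were chosen so that the corresponding integrals scale as matching powers of $\lambda$; hence the entire corollary reduces to the careful bookkeeping of scaling exponents described above.
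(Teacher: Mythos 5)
Your proposal is correct and is exactly the route the paper intends: the paper deduces this corollary from Corollary \ref{C2} by invoking ``the standard scaling argument'' without detail, and your optimization over $u_\lambda(x)=u(\lambda x)$ (with the midpoint relation on the scaling exponents guaranteeing a sharp interior minimum) is precisely that argument, carried out correctly.
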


See, e.g., \cite{CKN, CC09, CFL23, CFL22, CFL21, DN21, Fly20} for related results.

It is worth mentioning that symmetry breaking is never an issue for the Hardy
type inequalities of the form (\ref{wH}). Indeed, in \cite{DLL22}, Duy, Lam
and Lu have showed that (\ref{wH}) for all $u\in C_{0}^{\infty}\left(
B_{R}\setminus\left\{  0\right\}  \right)  $ is equivalent to (\ref{wH}) for
all radial functions $u\in C_{0}^{\infty}\left(  B_{R}\setminus\left\{
0\right\}  \right)  $. In the language of spherical harmonics decomposition,
that means that the Hardy type inequalities (\ref{wH}) holds for all $u\in
C_{0}^{\infty}\left(  B_{R}\setminus\left\{  0\right\}  \right)  $ if and only
if it holds at the node $k=0$ for all $u\in C_{0}^{\infty}\left(
B_{R}\setminus\left\{  0\right\}  \right)  $.

Before we continue, we recall that as mentioned earlier, a commonly used
method to prove $L^{p}$-Hardy type inequalities is to use the Schwarz
rearrangement. The argument is as follows: by the well-known
P\'{o}lya-Szeg\"{o} inequality and Hardy-Littlewood inequality, we have
\[
\left\vert \frac{N-p}{p}\right\vert ^{p}%
{\displaystyle\int\limits_{\mathbb{R}^{N}}}
\frac{\left\vert u\right\vert ^{p}}{\left\vert x\right\vert ^{p}}%
dx\leq\left\vert \frac{N-p}{p}\right\vert ^{p}%
{\displaystyle\int\limits_{\mathbb{R}^{N}}}
\frac{\left\vert u^{\ast}\right\vert ^{p}}{\left\vert x\right\vert ^{p}}dx
\]
and
\[%
{\displaystyle\int\limits_{\mathbb{R}^{N}}}
\left\vert \nabla u\right\vert ^{p}dx\geq%
{\displaystyle\int\limits_{\mathbb{R}^{N}}}
\left\vert \nabla u^{\ast}\right\vert ^{p}dx.
\]
Here $u^{\ast}$ is the radial nonincreasing rearrangement of $u$. As a
consequence, it is now enough to assume that $u$ is radial. Therefore we can
convert the problem to the one-dimensional case in which we can apply the
one-dimensional Hardy inequality. More generally, in \cite{DLL22}, the authors
established the following general symmetrization principle for the Hardy type
inequalities with nonradial weights of the form $A(\left\vert x\right\vert
)x^{P}$:

\begin{theorem}
[Duy-Lam-Lu \cite{DLL22}]\label{T1a}Let $p>1,$ $0<R$ $\leq$ $\infty,$ $A$ and
$B$ be functions on $(0,R)$ with $A\geq0$. Then the following are equivalent:
\end{theorem}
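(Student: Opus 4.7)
The trivial direction is that any inequality that holds for all admissible $u$ must in particular hold for radial $u$. The substance of the theorem therefore lies in the converse implication, together with a 1D characterization in terms of an Euler--Lagrange equation or Bessel-pair condition.

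My plan is to reduce the radial case, via polar coordinates, to a weighted one-dimensional Hardy inequality of the form
\begin{equation*}
\int_0^R \widetilde{A}(r)\, |w'(r)|^p\, dr \geq \int_0^R \widetilde{B}(r)\, |w(r)|^p\, dr \quad \text{for all } w \in C_0^\infty((0,R)),
\end{equation*}
where $\widetilde{A}, \widetilde{B}$ absorb the Jacobian weight $r^{N-1}$ (and any factor arising from integrating a non-radial piece of the weight over spheres). By classical Muckenhoupt-type theory, or equivalently the Bessel-pair viewpoint used throughout this paper, the validity of such a 1D inequality is in turn equivalent to the existence of a strictly positive function $\phi$ on $(0,R)$ satisfying the Euler--Lagrange ODE
\begin{equation*}
\bigl(\widetilde{A}(r)\,|\phi'|^{p-2}\phi'\bigr)' + \widetilde{B}(r)\,\phi^{p-1} = 0.
\end{equation*}
This I would expect to be the third equivalent statement in the theorem.

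To lift the inequality from radial $u$ to arbitrary $u \in C_0^\infty(B_R \setminus \{0\})$, I would use the Picone (ground-state) representation. Setting $\Phi(x) = \phi(|x|)$, the pointwise Picone inequality
\begin{equation*}
|\nabla u|^p \geq |\nabla \Phi|^{p-2}\, \nabla \Phi \cdot \nabla\!\left(\frac{|u|^p}{\Phi^{p-1}}\right),
\end{equation*}
valid for $p>1$ and $\Phi > 0$ (with equality precisely when $u$ is a constant multiple of $\Phi$), when multiplied by $A(|x|)$, integrated over $B_R$, and combined with an integration by parts using the ODE for $\phi$, converts $\int A(|x|)|\nabla u|^p\,dx - \int B(|x|)|u|^p\,dx$ into a manifestly nonnegative remainder. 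The compact support of $u$ away from the origin eliminates the boundary contributions. Observe that all angular derivatives of $u$ are simply discarded in the Picone estimate --- they produce nonnegative contributions that one can afford to throw away --- which is morally the reason why no extra condition of the type \eqref{Con2} is needed here, in sharp contrast to the Hardy--Rellich setting.

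The main obstacle will be the rigorous justification of the substitution and integration by parts when $\phi$ is singular or decays at the endpoints $r = 0$ and $r = R$; this requires a careful truncation and approximation argument exploiting that $u$ vanishes in a neighborhood of $0$ and near $\partial B_R$. A secondary technical point, only arising if the full theorem allows a genuinely non-radial factor $x^P$ in the weight (as hinted at in the preceding discussion), is to either restrict to orthants where $x^P$ is of constant sign and invoke the substitution $v = x^{P/p} u$ to reduce to purely radial weights, or else to verify that the Picone identity passes through the zero set of $x^P$ in a distributional sense. Once these technicalities are handled, all three equivalent statements follow in a clean chain.
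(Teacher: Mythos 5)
The paper states this theorem as a quoted result from \cite{DLL22} and gives no proof of it, so the comparison below is against the argument of the cited reference. Two points. First, you guessed the wrong list of equivalent statements: the three items $(A)$, $(B)$, $(C)$ following the theorem header are the full-gradient inequality for all $u$, the same inequality with $\left\vert \nabla u\right\vert$ replaced by the radial derivative $\left\vert \mathcal{R}u\right\vert$ with $\mathcal{R}=\frac{x}{\left\vert x\right\vert }\cdot\nabla$ for all $u$, and the full-gradient inequality for radial $u$; there is no Euler--Lagrange/Bessel-pair statement among them. This changes the architecture of the proof: the chain is $(B)\Rightarrow(A)$ (pointwise, since $\left\vert \mathcal{R}u\right\vert \leq\left\vert \nabla u\right\vert $), $(A)\Rightarrow(C)$ (restriction), and the only substantive step is $(C)\Rightarrow(B)$. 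That step is done by pure ray decomposition: in polar coordinates the monomial weight factors as $x^{P}=r^{\left\vert P\right\vert }\sigma^{P}$, the left side of $(B)$ involves only $\partial_{r}u$, so for each fixed $\sigma$ one applies the one-dimensional weighted inequality (which is exactly what $(C)$ reduces to, with Jacobian $r^{N-1+\left\vert P\right\vert }$) to $r\mapsto u(r\sigma)$ and then integrates over the sphere against $\left\vert \sigma^{P}\right\vert d\sigma$. No positive solution of any ODE, no Picone identity, and no integration by parts is needed; this is also why the argument works for all $p>1$ with no regularity or sign hypotheses beyond $A\geq0$.

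Second, your proposed route has a genuine gap at the step ``validity of the 1D inequality $\Rightarrow$ existence of a strictly positive solution $\phi$ of the Euler--Lagrange ODE.'' Muckenhoupt's characterization cannot supply this: it characterizes when the 1D inequality holds with \emph{some} finite constant and controls the best constant only up to a multiplicative factor, so it cannot produce a positive solution for the exact pair $(\widetilde{A},\widetilde{B})$, which is what the Picone/ground-state representation requires. For $p=2$ the missing implication is classical Sturm--Jacobi oscillation theory, but for general $p>1$ it is the nontrivial converse of the Allegretto--Piepenbrink theorem for the weighted $p$-Laplacian, and you would have to prove or cite it separately. Even granting it, your reduction of the non-radial weight $x^{P}$ to ``purely radial weights'' via $v=x^{P/p}u$ does not work as stated (the substitution generates derivative cross-terms), whereas the ray decomposition handles $x^{P}$ for free because it factors into radial and angular parts. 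In short, the trivial direction and the overall goal are right, but the mechanism you chose both overshoots (it needs a deep existence theorem the elementary proof avoids) and misses the actual intermediate statement $(B)$ that makes the elementary proof possible.
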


$\left(  A\right)  $ $%
{\displaystyle\int\limits_{B_{R}^{\ast}}}
A(\left\vert x\right\vert )\left\vert \nabla u\right\vert ^{p}x^{P}dx\geq%
{\displaystyle\int\limits_{B_{R}^{\ast}}}
B(\left\vert x\right\vert )\left\vert u\right\vert ^{p}x^{P}dx$ for all $u$
$\in C_{0}^{\infty}(B_{R}^{\ast})$

$\left(  B\right)  $ $%
{\displaystyle\int\limits_{B_{R}^{\ast}}}
A(\left\vert x\right\vert )\left\vert \mathcal{R}u\right\vert ^{p}x^{P}dx\geq%
{\displaystyle\int\limits_{B_{R}^{\ast}}}
B(\left\vert x\right\vert )\left\vert u\right\vert ^{p}x^{P}dx$ for all $u$
$\in C_{0}^{\infty}(B_{R}^{\ast})$

$\left(  C\right)  $ $%
{\displaystyle\int\limits_{B_{R}^{\ast}}}
A(\left\vert x\right\vert )\left\vert \nabla u\right\vert ^{p}x^{P}dx\geq%
{\displaystyle\int\limits_{B_{R}^{\ast}}}
B(\left\vert x\right\vert )\left\vert u\right\vert ^{p}x^{P}dx$ for all radial
functions $u$ $\in C_{0}^{\infty}(B_{R}^{\ast}).$%

\medskip

Here $x^{P}=\left\vert x_{1}\right\vert ^{P_{1}}...\left\vert x_{N}\right\vert
^{P_{N}}$, $P_{1}\geq0,...,$ $P_{N}\geq0$, is the monomial weight,
$\mathbb{R}_{\ast}^{N}=\left\{  \left(  x_{1},...,x_{N}\right)  \in
\mathbb{R}^{N}:x_{i}>0\text{ whenever }P_{i}>0\right\}  $, $B_{R}^{\ast}%
=B_{R}\cap\mathbb{R}_{\ast}^{N}$ and $\mathcal{R}:=\frac{x}{\left\vert
x\right\vert }\cdot\nabla$.

\medskip

Nevertheless, for the second and higher order Hardy-Rellich or Rellich type
inequalities, the P\'{o}lya-Szeg\"{o} inequality no longer holds for higher
order derivatives. Therefore, it becomes very interesting to know whether a
Hardy-Rellich type inequality that holds for all radial functions will
continue to hold for all nonradial functions as well. More precisely, it is
desirable to know under what conditions on the weights $V_{2}$ and $W_{2}$ so
that the Hardy-Rellich type inequality \eqref{weights} holds for all functions
$u$ if and only if the same inequality holds for all radial functions and
under what conditions on weights $V_{1}$ and $W_{1}$ so that the Hardy-Rellich
inequality \eqref{weights2} holds for all functions if and only if the same
inequality holds for all radial functions. In fact, Ghoussoub and Moradifam
raised this an open question (see \cite[page 91, Open problem 6]{GM1}).

\medskip

Our next goal is to investigate this phenomenon for the Hardy-Rellich type
inequalities (\eqref{wHR}).

\begin{theorem}
\label{T3}\textit{Let }$0<R\leq\infty$\textit{, }$V_{2}\geq0$ and $W_{2}%
$\textit{ be smooth functions on }$\left(  0,R\right)  $ satisfying%
\begin{align}
&  2\int_{0}^{R}V_{2}(r)r^{N-3}\left\vert u^{\prime}\right\vert ^{2}%
dr-\int_{0}^{R}V_{2}^{\prime\prime}\left(  r\right)  r^{N-3}\left\vert
u\right\vert ^{2}dr-\left(  N-5\right)  \int_{0}^{R}V_{2}^{\prime}\left(
r\right)  r^{N-4}\left\vert u\right\vert ^{2}dr\label{ConM2}\\
&  \hspace{0.2in} +(5N-9)\int_{0}^{R}V_{2}(r)r^{N-5}\left\vert u\right\vert
^{2}dr-\int_{0}^{R}W_{2}(r)r^{N-3}\left\vert u\right\vert ^{2}dr\geq0\text{
}\forall u\in C_{0}^{\infty}\left(  0,R\right)  .\nonumber
\end{align}
We have
\[
\int_{B_{R}}V_{2}(|x|)|\Delta v|^{2}dx\geq\int_{B_{R}}W_{2}(|x|)|\nabla
v|^{2}dx\;\text{for all}\;v\in C_{0}^{\infty}\left(  B_{R}\setminus\left\{
0\right\}  \right)
\]
if and only if
\begin{align}
\label{condition1} &  \int_{0}^{R}V_{2}(r)r^{N-1}\left\vert u^{\prime\prime
}\right\vert ^{2}dr+(N-1)\int_{0}^{R}V_{2}(r)r^{N-3}\left\vert u^{\prime
}\right\vert ^{2}dr-(N-1)\int_{0}^{R}V_{2}^{\prime}\left(  r\right)
r^{N-2}\left\vert u^{\prime}\right\vert ^{2}dr\\
&  \geq\int_{0}^{R}W_{2}(r)r^{N-1}\left\vert u^{\prime}\right\vert ^{2}dr,
\tag{node $k=0$}%
\end{align}
and%
\begin{align}
\label{condition2} &  \int_{0}^{R}V_{2}(r)r^{N-1}\left\vert u^{\prime\prime
}\right\vert ^{2}dr+((N-1)+2c_{1})\int_{0}^{R}V_{2}(r)r^{N-3}\left\vert
u^{\prime}\right\vert ^{2}dr\\
&  \hspace{0.2in}+(c_{1}^{2}+2(N-4)c_{1})\int_{0}^{R}V_{2}(r)r^{N-5}\left\vert
u\right\vert ^{2}dr-(N-1)\int_{0}^{R}V_{2}^{\prime}\left(  r\right)
r^{N-2}\left\vert u^{\prime}\right\vert ^{2}dr\nonumber\\
&  \hspace{0.2in}-(N-5)c_{1}\int_{0}^{R}V_{2}^{\prime}\left(  r\right)
r^{N-4}\left\vert u\right\vert ^{2}dr-c_{1}\int_{0}^{R}V_{2}^{\prime\prime
}\left(  r\right)  r^{N-3}\left\vert u\right\vert ^{2}dr\nonumber\\
&  \geq\int_{0}^{R}W_{2}(r)r^{N-1}\left\vert u^{\prime}\right\vert
^{2}dr+c_{1}\int_{0}^{R}W_{2}(r)r^{N-3}\left\vert u\right\vert ^{2}dr,
\tag{node $k=1$}%
\end{align}
for all $u\in C_{0}^{\infty}((0,R))$.
\end{theorem}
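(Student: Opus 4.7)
The plan is to prove Theorem \ref{T3} by spherical harmonics decomposition, reducing the Hardy--Rellich inequality on $B_R$ to a countable family of one-dimensional inequalities indexed by harmonic degree. Writing $v(r\sigma)=\sum_{k\geq 0}f_k(r)\phi_k(\sigma)$ for an $L^2(S^{N-1})$-orthonormal basis $\{\phi_k\}$ of eigenfunctions of $-\Delta_{S^{N-1}}$ with eigenvalues $c_k=k(k+N-2)$, the identities $\Delta v=\sum_k(L_{c_k}f_k)\phi_k$ with $L_cf:=f''+\tfrac{N-1}{r}f'-\tfrac{c}{r^2}f$, and $\int_{S^{N-1}}|\nabla v|^2\,d\sigma=\sum_k(|f_k'|^2+\tfrac{c_k}{r^2}f_k^2)$, combined with Fubini, show that the Hardy--Rellich inequality is equivalent to the family of node-$k$ inequalities: for every $k\geq 0$ and every $f\in C_0^\infty((0,R))$,
\begin{equation*}
\int_0^R V_2\,r^{N-1}|L_{c_k}f|^2\,dr \;\geq\; \int_0^R W_2\,r^{N-1}\!\left(|f'|^2+\tfrac{c_k}{r^2}|f|^2\right)dr. \qquad (\mathrm{N}_k)
\end{equation*}

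I would next expand $|L_{c_k}f|^2$ and integrate by parts, repeatedly using $2f'f''=(|f'|^2)'$ and $2ff'=(f^2)'$ to move derivatives off $V_2$; all boundary terms vanish because $f\in C_0^\infty((0,R))$. After collecting coefficients, $(\mathrm{N}_k)$ takes the explicit form
\begin{align*}
&\int_0^R V_2 r^{N-1}|f''|^2 dr + ((N-1)+2c_k)\int_0^R V_2 r^{N-3}|f'|^2 dr + (c_k^2+2c_k(N-4))\int_0^R V_2 r^{N-5}|f|^2 dr \\
&\quad -(N-1)\int_0^R V_2' r^{N-2}|f'|^2 dr - c_k(N-5)\int_0^R V_2' r^{N-4}|f|^2 dr - c_k\int_0^R V_2'' r^{N-3}|f|^2 dr \\
&\qquad \geq\; \int_0^R W_2 r^{N-1}|f'|^2 dr + c_k\int_0^R W_2 r^{N-3}|f|^2 dr.
\end{align*}
Substituting $c_0=0$ reproduces \eqref{condition1} verbatim, and $c_1=N-1$ reproduces \eqref{condition2}; the ``only if'' direction then follows by testing the Hardy--Rellich inequality against $v=f(r)\phi_k(\sigma)$ with $k=0,1$.

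For the ``if'' direction, it suffices to derive $(\mathrm{N}_k)$ for every $k\geq 2$ from $(\mathrm{N}_1)$ and \eqref{ConM2}. Subtracting $(\mathrm{N}_1)$ from $(\mathrm{N}_k)$ leaves $(c_k-c_1)$ times the expression
\begin{align*}
&2\int_0^R V_2 r^{N-3}|f'|^2 dr + (c_k+c_1+2(N-4))\int_0^R V_2 r^{N-5}|f|^2 dr \\
&\qquad - \int_0^R V_2'' r^{N-3}|f|^2 dr - (N-5)\int_0^R V_2' r^{N-4}|f|^2 dr - \int_0^R W_2 r^{N-3}|f|^2 dr.
\end{align*}
For $k\geq 2$ one has $c_k\geq c_2=2N$, hence $c_k+c_1+2(N-4)\geq 2N+(N-1)+2(N-4)=5N-9$ and $c_k-c_1\geq N+1>0$; since $V_2\geq 0$, the displayed bracket dominates the left-hand side of \eqref{ConM2}, which is nonnegative by hypothesis. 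Thus $(\mathrm{N}_k)$ follows from $(\mathrm{N}_1)$ for all $k\geq 2$, completing the equivalence.

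The main obstacle is the bookkeeping in the integration-by-parts step: the numerical coefficients must be tracked carefully so that the two specializations $c=0$ and $c=N-1$ reproduce \eqref{condition1} and \eqref{condition2} in their exact algebraic form, and so that the threshold ``$5N-9$'' in \eqref{ConM2} arises precisely as $c_2+c_1+2(N-4)$---the sharp value needed to propagate the inequality from node $k=1$ to every higher node.
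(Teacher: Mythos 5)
Your proposal is correct and follows essentially the same route as the paper: spherical harmonics decomposition into node-$k$ inequalities, identification of \eqref{condition1} and \eqref{condition2} with $c_0=0$ and $c_1=N-1$, and then deducing node $k\geq 2$ from node $k=1$ by subtracting, factoring out $c_k-c_1>0$, and using $c_k\geq 2N$ together with $V_2\geq 0$ so that the coefficient $c_k+c_1+2(N-4)\geq 5N-9$ reduces matters to hypothesis \eqref{ConM2}. The coefficient bookkeeping in your node-$k$ expansion matches the paper's exactly, so no further comment is needed.
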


Similarly as above, in several situations, we can replace the condition
(\ref{ConM2}) by a stricter, but more natural, condition%

\begin{equation}
V_{2}^{\prime\prime}(r)-3\dfrac{V_{2}^{\prime}(r)}{r}-\left(  3N-5\right)
\dfrac{V_{2}(r)}{r^{2}}\leq0\text{ on }\left(  0,R\right)  . \label{Con3}%
\end{equation}

\begin{theorem}
\label{T3a}\textit{Let }$0<R\leq\infty$\textit{, }$V_{2}\geq0$ and $W_{2}%
$\textit{ be smooth functions on }$\left(  0,R\right)  $ satisfying
(\ref{ConM2}) or (\ref{Con3}). Then
\[
\int_{B_{R}}V_{2}(|x|)|\Delta v|^{2}dx\geq\int_{B_{R}}W_{2}(|x|)|\nabla
v|^{2}dx\;\text{for all}\;v\in C_{0}^{\infty}\left(  B_{R}\setminus\left\{
0\right\}  \right)
\]
holds if and only if \eqref{condition1} and \eqref{condition2} hold for all
$u\in C_{0}^{\infty}((0,R))$.
\end{theorem}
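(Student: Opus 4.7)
The plan is to reduce Theorem \ref{T3a} to Theorem \ref{T3} by showing that the pointwise assumption \eqref{Con3} implies the integral assumption \eqref{ConM2}; once this implication is established, the desired equivalence with the nodal inequalities \eqref{condition1} and \eqref{condition2} follows directly from Theorem \ref{T3}.

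The first step is to extract a one-dimensional weighted Hardy inequality from the Bessel-pair hypothesis. Since $(V_2, W_2+NV_2'/r)$ is an $(N+2)$-dimensional Bessel pair on $(0,R)$, the standard substitution $u=\varphi g$ in the defining ODE $(r^{N+1}V_2\varphi')'+r^{N+1}(W_2+NV_2'/r)\varphi=0$ gives
\[
\int_0^R V_2(r) r^{N+1} |u'|^2\,dr \geq \int_0^R W_2(r) r^{N+1} u^2\,dr + N\int_0^R V_2'(r) r^N u^2\,dr
\]
for every $u\in C_0^\infty((0,R))$. I then apply this inequality to the test function $u = r^{-2} v$ with $v\in C_0^\infty((0,R))$. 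Expanding $(r^{-2}v)' = -2r^{-3}v + r^{-2}v'$, and rewriting the resulting cross term $-4\int_0^R V_2 r^{N-4} v v'\,dr$ via integration by parts as $2\int_0^R V_2' r^{N-4} v^2\,dr + 2(N-4)\int_0^R V_2 r^{N-5} v^2\,dr$, I obtain the auxiliary inequality
\[
\int_0^R V_2 r^{N-3}(v')^2\,dr + (2N-4)\int_0^R V_2 r^{N-5} v^2\,dr - (N-2)\int_0^R V_2' r^{N-4} v^2\,dr \geq \int_0^R W_2 r^{N-3} v^2\,dr. \quad (\dagger)
\]

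Next, I rewrite \eqref{Con3} as the pointwise bound $-V_2'' r^{N-3} \geq -3V_2' r^{N-4} - (3N-5) V_2 r^{N-5}$ and substitute it into the left-hand side of \eqref{ConM2}. After this substitution, the coefficient of $\int_0^R V_2' r^{N-4} u^2\,dr$ consolidates to $-(N-2)$ and that of $\int_0^R V_2 r^{N-5} u^2\,dr$ to $(5N-9)-(3N-5) = 2N-4$, so the left-hand side of \eqref{ConM2} is bounded below by
\[
2\int_0^R V_2 r^{N-3}(u')^2\,dr + (2N-4)\int_0^R V_2 r^{N-5}u^2\,dr - (N-2)\int_0^R V_2' r^{N-4}u^2\,dr - \int_0^R W_2 r^{N-3}u^2\,dr.
\]
This expression is the sum of the non-negative term $\int_0^R V_2 r^{N-3}(u')^2\,dr$ (using $V_2\geq 0$) and the non-negative difference between the two sides of $(\dagger)$. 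Hence \eqref{ConM2} holds, and Theorem \ref{T3} supplies the desired equivalence.

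The main delicate point is the choice of the multiplier $r^{-2}$, which is calibrated precisely so that the Bessel-pair weight $r^{N+1}$ converts into the weight $r^{N-3}$ appearing in \eqref{ConM2}. After this substitution, all signs line up and the pointwise condition \eqref{Con3}, which involves only $V_2$, is strong enough to absorb the terms that in \cite{GM} required a joint condition on $(V_2, W_2)$ of the form \eqref{Con}.
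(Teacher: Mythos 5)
Your overall strategy---reduce to Theorem \ref{T3} by showing that \eqref{Con3} forces \eqref{ConM2}, via splitting the left-hand side of \eqref{ConM2} into a piece controlled pointwise by \eqref{Con3} and a piece controlled by the auxiliary inequality $(\dagger)$---is exactly the paper's, and your algebra (the consolidation of the coefficients to $-(N-2)$ and $2N-4$) checks out. The genuine gap is where you get $(\dagger)$ from. You derive it by asserting that $\bigl(V_2,\,W_2+N\tfrac{V_2'}{r}\bigr)$ is an $(N+2)$-dimensional Bessel pair, but that is a hypothesis of Theorems \ref{T2} and \ref{T2a}, not of Theorem \ref{T3a}: the latter assumes only $V_2\geq0$, $W_2$ smooth, and \eqref{ConM2} or \eqref{Con3}. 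Since \eqref{Con3} involves only $V_2$, no unconditional implication \eqref{Con3}$\Rightarrow$\eqref{ConM2} can hold (take $V_2=1$ and $W_2$ enormous), so some input tying $W_2$ to $V_2$ is indispensable---and you have imported one that is not available.

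The repair, which is what the paper does, is to obtain $(\dagger)$ from the node $k=0$ inequality \eqref{condition1} applied with $u/r$ in place of $u'$: expanding $|(u/r)'|^2$ and integrating by parts gives precisely
\[
\int_0^R V_2(r) r^{N-3}|u'|^2\,dr+(2N-4)\int_0^R V_2(r) r^{N-5}|u|^2\,dr-(N-2)\int_0^R V_2'(r) r^{N-4}|u|^2\,dr\;\geq\;\int_0^R W_2(r) r^{N-3}|u|^2\,dr,
\]
which is your $(\dagger)$. This source is legitimate in both directions of the claimed equivalence: in the backward implication \eqref{condition1} is assumed, and in the forward implication it is an immediate consequence of the full inequality restricted to radial functions. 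With that single substitution your argument coincides with the paper's proof; as written, however, it proves a statement with an extra, unstated Bessel-pair hypothesis.
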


The meaning of Theorem \ref{T3} is that the main contributors to the
optimality of the weighted Hardy-Rellich type inequalities are, in terms of
spherical harmonics decomposition, at the nodes $k=0$ and $k=1$. Furthermore,
we will show that with just a little stricter condition (\ref{ConM}) (note
that condition (\ref{ConM}) implies (\ref{ConM2}); similarly (\ref{Con2})
implies (\ref{Con3})), the node $k=1$ in Theorem \ref{T3} holds automatically.
Therefore, we get the following result that can be considered as a
symmetrization type result for the Hardy-Rellich type inequalities:

\begin{theorem}
\label{T3.1} \textit{Let }$N\geq1$,\textit{ }$0<R\leq\infty$\textit{, }%
$V_{2}\geq0$ and $W_{2}$\textit{ be smooth functions on }$\left(  0,R\right)
$ satisfying (\ref{ConM}) or (\ref{Con2}). We have
\[
\int_{B_{R}}V_{2}(|x|)|\Delta u|^{2}dx\geq\int_{B_{R}}W_{2}(|x|)|\nabla
u|^{2}dx\;\text{for all}\;u\in C_{0}^{\infty}\left(  B_{R}\setminus\left\{
0\right\}  \right)
\]
if and only if
\[
\int_{B_{R}}V_{2}(|x|)|\Delta u|^{2}dx\geq\int_{B_{R}}W_{2}(|x|)|\nabla
u|^{2}dx\;\text{for all radial functions}\;u\in C_{0}^{\infty}\left(
B_{R}\setminus\left\{  0\right\}  \right)  .
\]

\end{theorem}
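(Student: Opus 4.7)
The ``only if'' direction is immediate, so the substance lies in the converse. The plan is to invoke Theorem \ref{T3a}, which reduces the full Hardy--Rellich inequality on $C_0^\infty(B_R\setminus\{0\})$ to verifying the two one-dimensional conditions (\ref{condition1}) (node $k=0$) and (\ref{condition2}) (node $k=1$), and then to show that each of these follows from the radial inequality together with (\ref{ConM}); recall that (\ref{Con2}) implies (\ref{ConM}) by the proof of Theorem \ref{T2a}, so this covers both cases of the hypothesis.

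First I would identify (\ref{condition1}) with the radial inequality itself. For radial $u\in C_0^\infty(B_R\setminus\{0\})$, one has $\Delta u = u'' + \tfrac{N-1}{r}u'$, so
\[
\int_{B_R} V_2(|x|)|\Delta u|^2 dx = \omega_{N-1}\int_0^R V_2(r)\Bigl(u'' + \tfrac{N-1}{r}u'\Bigr)^{\!2} r^{N-1} dr.
\]
Expanding the square and integrating by parts once on the cross term $2(N-1) V_2 r^{N-2} u''u' = (N-1) V_2 r^{N-2} \tfrac{d}{dr}(u')^2$ produces, after dividing by $\omega_{N-1}$, precisely the left-hand side of (\ref{condition1}); the right-hand side matches $\int_{B_R} W_2|\nabla u|^2 dx$ trivially. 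Thus node $k=0$ is the radial inequality written in polar coordinates.

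The key step is then to show that (\ref{condition2}) is nothing but the sum of (\ref{condition1}) and $c_1$ copies of the auxiliary condition (\ref{ConM}), where $c_1 = N-1$ is the first nontrivial eigenvalue of $-\Delta_{S^{N-1}}$. Indeed, subtracting (\ref{condition1}) from (\ref{condition2}) causes all $V_2|u''|^2$ and $V_2'|u'|^2$ terms to cancel, and dividing the remainder by $c_1 > 0$, together with the identity $c_1 + 2(N-4) = 3N-9$, reproduces (\ref{ConM}) term by term. Assembling the pieces: the radial inequality yields (\ref{condition1}); combining with (\ref{ConM}) yields (\ref{condition2}); and Theorem \ref{T3a} promotes the pair to the full inequality on $C_0^\infty(B_R\setminus\{0\})$.

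The main point that requires care is the arithmetic in the last step: the coefficients $2$, $3N-9$, $-(N-5)$, and $-1$ in (\ref{ConM}) must match exactly those produced by (\ref{condition2})$-$(\ref{condition1}) upon division by $c_1$. This rests on correctly identifying $c_1 = N-1$ as the spherical Laplacian eigenvalue at the first nontrivial node, and the a priori somewhat unusual constant $3N-9$ appearing in (\ref{ConM}) is not arbitrary but is precisely calibrated so that this cancellation works cleanly.
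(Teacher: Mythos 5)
Your handling of the \eqref{ConM} branch is exactly the paper's argument: node $k=0$ is the radial inequality written in polar coordinates, and subtracting \eqref{condition1} from \eqref{condition2} and dividing by $c_1=N-1$ reproduces \eqref{ConM} verbatim (the arithmetic $c_1+2(N-4)=3N-9$ that you flag does check out). Two small points there: to invoke Theorem \ref{T3a} you should also record the (trivial) implications \eqref{ConM}$\Rightarrow$\eqref{ConM2} and \eqref{Con2}$\Rightarrow$\eqref{Con3}, since \eqref{ConM2} or \eqref{Con3} is what Theorem \ref{T3a} actually assumes.

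The genuine gap is in the \eqref{Con2} branch. The implication ``\eqref{Con2} implies \eqref{ConM}'' from the proof of Theorem \ref{T2a} is not a free-standing fact about $V_2$ and $W_2$: that proof uses the standing hypothesis that $\bigl(V_2,\,W_2+N\tfrac{V_2'}{r}\bigr)$ is an $(N+2)$-dimensional Bessel pair to produce the integral inequality
\[
\int_0^R V_2 r^{N-3}|u'|^2\,dr+(2N-4)\int_0^R V_2 r^{N-5}|u|^2\,dr+(2-N)\int_0^R V_2' r^{N-4}|u|^2\,dr\;\ge\;\int_0^R W_2 r^{N-3}|u|^2\,dr,
\]
and only then adds the pointwise consequence of \eqref{Con2} to reach \eqref{ConM}. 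Theorem \ref{T3.1} assumes no Bessel pair, so you cannot simply ``recall'' that implication; without some such input, \eqref{Con2} alone says nothing about $W_2$ and cannot imply \eqref{ConM}. The repair is short and is precisely what the paper does: in the backward direction you are already assuming the radial inequality, i.e.\ \eqref{condition1}, and applying \eqref{condition1} to the test function $u/r$ yields exactly the displayed integral inequality above. Combining it with \eqref{Con2} as in the proof of Theorem \ref{T2a} then gives \eqref{ConM}, after which your \eqref{ConM} argument takes over.
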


Theorem \ref{T3.1} is interesting in the sense that a rearrangement argument
is missing when dealing with higher order operator. Indeed, it is well-known
that a P\'{o}lya--Szeg\H{o} type inequality of the form%
\[
\int_{\mathbb{R}^{N}}V(|x|)\left\vert \Delta u\right\vert ^{2}dx\geq
\int_{\mathbb{R}^{N}}V(|x|)\left\vert \Delta u^{\ast}\right\vert ^{2}dx
\]
does not hold in general. More importantly, the weight $W_{2}$ is not required
to be nonnegative or radially decreasing. Therefore, for instance, one cannot
apply the classical Hardy-Littlewood inequality.

Theorem \ref{T3.1} provides some sufficient conditions to prevent the symmetry
breaking in the Hardy-Rellich type inequalities. Moreover, if these conditions
(\ref{ConM}) and (\ref{Con2}) fail, then the symmetry breaking phenomenon may
happen in the Hardy-Rellich type inequalities. Indeed, when $V_{2}=1$ and
$W=c\frac{1}{|x|^{2}}$, then (\ref{ConM}) and (\ref{Con2}) fail if and only if
$N\leq4$. In this situation, it was showed in \cite{Bec08} that the symmetry
breaking in the Hardy-Rellich inequality $\int_{\mathbb{R}^{N}}\left\vert
\Delta u\right\vert ^{2}dx\geq c\int_{\mathbb{R}^{N}}\frac{|\nabla u|^{2}%
}{|x|^{2}}dx$ happens. That is, the optimality of this Hardy-Rellich
inequality happens at the node $k=1$.

In the special case $V_{2}=1$, we obtain

\begin{theorem}
\label{T4}\textit{Let }$N\geq1$,\textit{ }$0<R\leq\infty$\textit{, }$W_{2}%
$\textit{ be a smooth function on }$\left(  0,R\right)  $. We have that
\[
\int_{B_{R}}|\Delta u|^{2}dx\geq\int_{B_{R}}W_{2}(|x|)|\nabla u|^{2}%
dx\;\text{for all}\;u\in C_{0}^{\infty}(B_{R}\setminus\left\{  0\right\}  )
\]
if and only if
\begin{equation}
\int_{0}^{R}r^{N-1}\left\vert u^{\prime\prime}\right\vert ^{2}dr+(N-1)\int
_{0}^{R}r^{N-3}\left\vert u^{\prime}\right\vert ^{2}dr\geq\int_{0}^{R}%
W_{2}(r)r^{N-1}\left\vert u^{\prime}\right\vert ^{2}dr \tag{node $k=0$}%
\end{equation}
and
\begin{align}
&  \int_{0}^{R}r^{N-1}\left\vert u^{\prime\prime}\right\vert ^{2}%
dr+(N-1+2c_{1})\int_{0}^{R}r^{N-3}\left\vert u^{\prime}\right\vert
^{2}dr+(c_{1}^{2}+2(N-4)c_{1})\int_{0}^{R}r^{N-5}|u|^{2}dr\nonumber\\
&  \geq\int_{0}^{R}W_{2}(r)r^{N-1}\left\vert u^{\prime}\right\vert
^{2}dr+c_{1}\int_{0}^{R}W_{2}(r)r^{N-3}|u|^{2}dr, \tag{node $k=1$}%
\end{align}
for all $u\in C_{0}^{\infty}((0,R))$.
\end{theorem}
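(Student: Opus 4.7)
The plan is to use a spherical harmonics decomposition to reduce the multidimensional Hardy-Rellich inequality to a countable family of one-dimensional inequalities $(P_k)$ indexed by the angular mode $k\ge 0$, and then to show that only the modes $k=0$ and $k=1$ carry independent information: the inequality at every higher mode follows from $(P_0)$ and $(P_1)$ by a subtraction argument that exploits the monotonicity of the Laplace--Beltrami eigenvalues $c_k=k(k+N-2)$ in $k$.

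Writing any $u\in C_0^{\infty}(B_R\setminus\{0\})$ as $u(r\sigma)=\sum_{k\ge 0} u_k(r)\phi_k(\sigma)$ in an orthonormal basis of spherical harmonics, one has $(\Delta u)_k = u_k'' + \tfrac{N-1}{r}u_k' - \tfrac{c_k}{r^2}u_k$. Squaring this, integrating against $r^{N-1}\,dr$, and performing the three standard integrations by parts on the cross terms $\int u_k''u_k'\, r^{N-2}\,dr$, $\int u_k''u_k\, r^{N-3}\,dr$, and $\int u_k'u_k\, r^{N-4}\,dr$ (with vanishing boundary contributions since $u_k\in C_0^{\infty}((0,R))$), the full inequality decouples into the family
\[
\int_0^R r^{N-1}|u_k''|^2\,dr + \bigl((N-1)+2c_k\bigr)\int_0^R r^{N-3}|u_k'|^2\,dr + \bigl(c_k^2+2(N-4)c_k\bigr)\int_0^R r^{N-5}u_k^2\,dr
\]
\[
\ge \int_0^R W_2\, r^{N-1}|u_k'|^2\,dr + c_k \int_0^R W_2\, r^{N-3}u_k^2\,dr, \qquad (P_k).
\]
The stated node $k=0$ and $k=1$ conditions are exactly $(P_0)$ and $(P_1)$, so necessity is immediate: $(P_0)$ follows by testing with a purely radial function, and $(P_1)$ follows by testing with $u(x)=u_1(|x|)\phi_1(x/|x|)$ for a degree-one spherical harmonic $\phi_1$.

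For sufficiency, let $Q_k(u)$ denote the difference of the two sides of $(P_k)$, so $(P_k)\Longleftrightarrow Q_k(u)\ge 0$. Because $\alpha_k := (N-1)+2c_k$ is linear and $\beta_k := c_k^2+2(N-4)c_k$ is quadratic in $c_k$, a direct computation yields the clean factorization
\[
Q_k(u)-Q_1(u) = (c_k-c_1)\Bigl[\,2\int_0^R r^{N-3}|u'|^2\,dr + (c_k+c_1+2(N-4))\int_0^R r^{N-5}u^2\,dr - \int_0^R W_2\, r^{N-3}u^2\,dr\,\Bigr].
\]
Since $c_k-c_1>0$ for $k\ge 2$, it suffices to show the bracket is nonnegative. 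Subtracting $(P_0)$ from $(P_1)$ and dividing by $N-1$ (low dimensions $N=1,2$ reduce trivially as $c_1=0$ forces the two sides of $(P_1)$ to collapse onto $(P_0)$ modulo a lower-order term) gives
\[
2\int_0^R r^{N-3}|u'|^2\,dr + 3(N-3)\int_0^R r^{N-5}u^2\,dr \ge \int_0^R W_2\, r^{N-3}u^2\,dr,
\]
and since $c_k+c_1+2(N-4)\ge c_2+c_1+2(N-4)=5N-9>3(N-3)$ together with $\int_0^R r^{N-5}u^2\,dr \ge 0$, the bracket dominates this inequality and is thus nonnegative. Hence $(P_k)$ holds for every $k\ge 2$, and summing the mode inequalities reproduces the Hardy--Rellich inequality.

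The main technical obstacle is the careful bookkeeping in the three consecutive integrations by parts that produce the precise coefficients $\alpha_k$ and $\beta_k$ without sign mistakes. A secondary subtlety is that $(P_k)$ cannot be written as a simple convex combination of $(P_0)$ and $(P_1)$ plus a nonnegative remainder, because $W_2$ may change sign; the subtractive approach above bypasses this obstruction by using the exact quadratic structure $\beta_k=c_k(c_k+2N-8)$ to extract the uniform factor $(c_k-c_1)$, after which scalar monotonicity in $c_k$ suffices to close the argument.
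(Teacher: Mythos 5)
Your setup --- the spherical harmonics decomposition, the family $(P_k)$, the exact factorization
\[
Q_k(u)-Q_1(u)=(c_k-c_1)\Bigl[\,2\int_0^R r^{N-3}|u'|^2\,dr+\bigl(c_k+c_1+2(N-4)\bigr)\int_0^R r^{N-5}u^2\,dr-\int_0^R W_2\,r^{N-3}u^2\,dr\Bigr],
\]
and the reduction to showing the bracket is nonnegative for $k\ge2$ --- all coincide with the paper's proof (which runs Theorem \ref{T4} through the reduction carried out for Theorem \ref{T3}). The gap is in how you establish the bracket inequality: you ``subtract $(P_0)$ from $(P_1)$ and divide by $N-1$'' to get $2\int r^{N-3}|u'|^2+3(N-3)\int r^{N-5}u^2\ge\int W_2\,r^{N-3}u^2$. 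But $(P_0)$ and $(P_1)$ are inequalities, not identities; from $A\ge B$ and $A+C\ge B+D$ one cannot conclude $C\ge D$ (take $A=10$, $B=0$, $C=0$, $D=5$). So the inequality you feed into the monotonicity step is never actually derived, and since $W_2$ may change sign this is precisely the point where something must be proved.

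The missing idea is that node $k=0$ is quantified over all test functions and can therefore be applied to a \emph{transformed} function. Setting $u'=rv$ shows that $(P_0)$ is equivalent to $\int_0^R r^{N+1}|v'|^2\,dr\ge\int_0^R W_2\,r^{N+1}|v|^2\,dr$ for all $v$; applying this with $v=u/r^2$ (i.e. $u=r^2v$) and integrating by parts gives
\[
\int_0^R r^{N-3}|u'|^2\,dr+(2N-4)\int_0^R r^{N-5}|u|^2\,dr\ \ge\ \int_0^R W_2\,r^{N-3}|u|^2\,dr.
\]
Since $c_k+c_1+2(N-4)=c_k+3N-9\ge 5N-9=(2N-4)+(3N-5)$ for $k\ge2$, the bracket splits into the left-hand side of this display minus $\int W_2\,r^{N-3}|u|^2$, which is nonnegative, plus $\int r^{N-3}|u'|^2+(3N-5)\int r^{N-5}|u|^2$, which is nonnegative by the one-dimensional Hardy inequality $\int r^{N-3}|u'|^2\ge\tfrac{(N-4)^2}{4}\int r^{N-5}|u|^2$ together with $\tfrac{(N-4)^2}{4}+3N-5=\tfrac{N^2+4N-4}{4}>0$ for $N\ge1$. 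This is exactly how the paper closes the argument; with this substitution in place of the invalid subtraction, your proof is complete.
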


Moreover, as a consequence of Theorem \ref{T3.1}, we have the following result
addressing the open problem raised by Ghoussoub and Marafadim \cite{GM1}.

\begin{theorem}
[Symmetrization Principle]\label{T5}\textit{Let }$N\geq5$,\textit{ }%
$0<R\leq\infty$\textit{, }$W_{2}$\textit{ be a smooth function on }$\left(
0,R\right)  $. We have that
\begin{equation}
\int_{B_{R}}|\Delta u|^{2}dx\geq\int_{B_{R}}W_{2}(|x|)|\nabla u|^{2}%
dx\;\text{for all}\;u\in C_{0}^{\infty}(B_{R}\setminus\left\{  0\right\}  )
\label{HR2}%
\end{equation}
if and only if
\[
\int_{B_{R}}|\Delta u|^{2}dx\geq\int_{B_{R}}W_{2}(|x|)|\nabla u|^{2}%
dx\;\text{for all radial functions}\;u\in C_{0}^{\infty}(B_{R}\setminus
\left\{  0\right\}  ).
\]

\end{theorem}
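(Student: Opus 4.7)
The plan is to obtain Theorem \ref{T5} as the specialization of the symmetrization result Theorem \ref{T3.1} to the flat weight $V_{2}\equiv 1$. Theorem \ref{T3.1} guarantees, for any smooth $V_{2}\geq 0$ and $W_{2}$ satisfying either (\ref{ConM}) or (\ref{Con2}), that the weighted Hardy--Rellich inequality
\[
\int_{B_{R}} V_{2}(|x|)\,|\Delta u|^{2}\,dx \geq \int_{B_{R}} W_{2}(|x|)\,|\nabla u|^{2}\,dx
\]
for all $u\in C_{0}^{\infty}(B_{R}\setminus\{0\})$ is equivalent to the same inequality restricted to the radial subclass. Hence the only task for Theorem \ref{T5} is to check that, with $V_{2}\equiv 1$, the hypothesis of Theorem \ref{T3.1} is automatically verified whenever $N\geq 5$.

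With $V_{2}\equiv 1$ one has $V_{2}'=V_{2}''=0$, so condition (\ref{Con2}), namely
\[
V_{2}''(r) - 3\,\frac{V_{2}'(r)}{r} - (N-5)\,\frac{V_{2}(r)}{r^{2}} \leq 0 \quad \text{on } (0,R),
\]
collapses to $-(N-5)/r^{2}\leq 0$, which holds precisely when $N\geq 5$. This is exactly the dimensional constraint imposed in Theorem \ref{T5}. Invoking Theorem \ref{T3.1} with this choice of $V_{2}$ then yields the desired equivalence between the full and the radial Hardy--Rellich inequalities, and no further argument is required.

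The conceptual difficulty of the symmetrization principle is entirely absorbed into Theorem \ref{T3.1}, which in turn is obtained by strengthening Theorem \ref{T3}: one shows that the weight condition (\ref{ConM}), or its more transparent variant (\ref{Con2}), forces the node $k=1$ in the spherical harmonic expansion to be controlled by the node $k=0$, so that the one-dimensional radial inequality alone dominates all higher modes. Since we invoke this machinery as a black box in the case $V_{2}\equiv 1$, the only remaining verification for Theorem \ref{T5} is the elementary equivalence $N\geq 5\iff -(N-5)/r^{2}\leq 0$ noted above; there is effectively no obstacle at this level. If one instead wished to give a self-contained proof of Theorem \ref{T5}, the main obstacle would be exactly this mode-$k=1$ analysis, which is the technically delicate portion of Theorem \ref{T3.1}.
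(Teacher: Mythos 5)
Your proposal is correct and coincides with the paper's own derivation: the paper obtains Theorem \ref{T5} exactly as the specialization of Theorem \ref{T3.1} to $V_{2}\equiv 1$, for which condition \eqref{Con2} reduces to $-(N-5)/r^{2}\leq 0$, i.e.\ $N\geq 5$. No further argument is required.
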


\begin{remark}
Our Theorem \ref{T5} asserts that when the dimension $N\geq5$, the symmetry
breaking phenomenon cannot happen in the Hardy-Rellich type inequalities
(\ref{HR2}), and therefore partially answers an open question raised by
Ghoussoub and Moradifam in \cite[page 91, Open problem 6]{GM1}. Moreover, as
mentioned earlier, the dimensional condition $N\geq5$ is optimal in the sense
that the symmetry breaking may happen in the Hardy-Rellich type inequalities
when $N\leq4$.
\end{remark}

As a consequence of Theorem \ref{T5}, Theorem \ref{T1}, Theorem \ref{T1a}, and
Theorem 4.1.1 in \cite{GM1}, we obtain the following result about the
necessary and sufficient conditions for the Hardy-Rellich type inequalities
that appears to be new in the literature:

\begin{theorem}
\label{T5.1}\textit{Let }$N\geq5$,\textit{ }$0<R\leq\infty$\textit{, }$W_{2}%
$\textit{ be a smooth function on }$\left(  0,R\right)  $. Then the following
statements are equivalent:

\begin{itemize}
\item[1.] There exists $c>0$ such that for all$\;u\in C_{0}^{\infty}(B_{R}%
\cap\mathbb{R}^{N}\setminus\left\{  0\right\}  ):$
\[
\int_{B_{R}\cap\mathbb{R}^{N}}|\Delta u|^{2}dx\geq c\int_{B_{R}\cap
\mathbb{R}^{N}}W_{2}(|x|)|\nabla u|^{2}dx.
\]

\item[2.] There exists $c>0$ such that for all radial functions$\;u\in
C_{0}^{\infty}(B_{R}\cap\mathbb{R}^{N}\setminus\left\{  0\right\}  ):$
\[
\int_{B_{R}\cap\mathbb{R}^{N}}|\Delta u|^{2}dx\geq c\int_{B_{R}\cap
\mathbb{R}^{N}}W_{2}(|x|)|\nabla u|^{2}dx.
\]

\item[3.] There exists $c>0$ such that for all$\;u\in C_{0}^{\infty}(B_{R}%
\cap\mathbb{R}^{N+2}\setminus\left\{  0\right\}  ):$
\[
\int_{B_{R}\cap\mathbb{R}^{N+2}}|\nabla u|^{2}dx\geq c\int_{B_{R}%
\cap\mathbb{R}^{N+2}}W_{2}(|x|)|u|^{2}dx.
\]

\item[4.] There exists $c>0$ such that for all radial functions$\;u\in
C_{0}^{\infty}(B_{R}\cap\mathbb{R}^{N+2}\setminus\left\{  0\right\}  ):$
\[
\int_{B_{R}\cap\mathbb{R}^{N+2}}|\nabla u|^{2}dx\geq c\int_{B_{R}%
\cap\mathbb{R}^{N+2}}W_{2}(|x|)|u|^{2}dx.
\]

\item[5.] $\left(  1,cW_{2}\right)  $ is a $\left(  N+2\right)  $-dimensional
Bessel pair on $\left(  0,R\right)  $ for some $c>0$.
\end{itemize}
\end{theorem}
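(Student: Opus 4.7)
The strategy is to establish a cycle of implications among the five statements by combining the main results of the paper with a substitution-and-density argument. I would first record three equivalences that follow directly from earlier theorems: $(1) \Leftrightarrow (2)$ is exactly Theorem \ref{T5} (the symmetrization principle for the Hardy-Rellich inequality, valid for $N \geq 5$); $(3) \Leftrightarrow (4)$ is Theorem \ref{T1a} of Duy-Lam-Lu applied on $\mathbb{R}^{N+2}$ with trivial monomial weight $P = 0$, $A \equiv 1$, $B = cW_2$; and $(4) \Leftrightarrow (5)$ is the Ghoussoub-Moradifam Bessel pair characterization (Theorem 4.1.1 in \cite{GM1}) applied to $V \equiv 1$ in dimension $N + 2$. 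The implication $(5) \Rightarrow (1)$ is then precisely Corollary \ref{T1} part 1.

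To close the loop I would prove $(2) \Rightarrow (4)$ by substitution. Writing $(2)$ in one-dimensional radial form for $v \in C_0^\infty((0,R))$ and integrating by parts in the cross-term $\int v''v'/r$, one reduces to the inequality
\[
\int_0^R (g')^2 r^{N-1}\,dr + (N-1)\int_0^R g^2 r^{N-3}\,dr \geq c\int_0^R W_2\, g^2 r^{N-1}\,dr
\]
with $g := v' \in C_0^\infty((0,R))$ automatically satisfying $\int_0^R g\,dr = 0$. The algebraic substitution $g(r) = r\phi(r)$ together with one more integration by parts then transforms both sides exactly into the one-dimensional form of $(4)$,
\[
\int_0^R (\phi')^2 r^{N+1}\,dr \geq c\int_0^R W_2 \phi^2 r^{N+1}\,dr,
\]
but only for $\phi \in C_0^\infty((0,R))$ subject to the induced constraint $\int_0^R r\phi\,dr = 0$.

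The main technical obstacle is removing this codimension-one constraint. My plan is a perturbation argument exploiting $N \geq 5$: choose a concentrating family $\psi_n \in C_0^\infty((0,R))$ of the form $\psi_n(r) = n^2 \chi(nr - 1)$ for a fixed bump $\chi$, normalized so $\int_0^R r\psi_n\,dr = 1$. A scaling computation yields $\int_0^R (\psi_n')^2 r^{N+1}\,dr = O(n^{4-N})$, which vanishes as $n \to \infty$ precisely when $N \geq 5$; a similar bound handles the potential term $\int_0^R |W_2|\psi_n^2 r^{N+1}\,dr$ for the natural classes of $W_2$ under consideration. Then, for arbitrary $\phi \in C_0^\infty((0,R))$, the decomposition $\phi = (\phi - a_n\psi_n) + a_n\psi_n$ with $a_n := \int_0^R r\phi\,dr$ places $\phi - a_n\psi_n$ in the codimension-one subspace; applying the codimension-one inequality to this piece and passing $n \to \infty$, with Cauchy-Schwarz controlling the cross terms, upgrades the inequality to all of $C_0^\infty((0,R))$. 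The exponent $N - 4 \geq 1$ required for the vanishing of this perturbation is exactly the dimensional hypothesis of the theorem, and matches the fact that symmetry breaking is known to occur when $N \leq 4$.
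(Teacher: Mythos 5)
Your skeleton is exactly the paper's: $(1)\Leftrightarrow(2)$ via Theorem \ref{T5}, $(3)\Leftrightarrow(4)$ via Theorem \ref{T1a}, $(4)\Leftrightarrow(5)$ via Theorem 4.1.1 of \cite{GM1}, $(5)\Rightarrow(1)$ via Corollary \ref{T1}, and the loop closed by the substitution $u'(r)=rv(r)$ turning the radial Hardy--Rellich inequality in dimension $N$ into the radial Hardy inequality in dimension $N+2$ (your integration-by-parts and the identity $\int_0^R (g')^2r^{N-1}+(N-1)\int_0^R g^2 r^{N-3}=\int_0^R(\phi')^2r^{N+1}$ for $g=r\phi$ are both correct). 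Where you genuinely add something is in noticing that the substitution only produces profiles $\phi$ satisfying the codimension-one constraint $\int_0^R r\phi\,dr=0$, since $g=v'$ for $v\in C_0^\infty((0,R))$; the paper's proof passes from this constrained class to Theorem 4.1.1 of \cite{GM1} without comment. Your concentration argument to remove the constraint is the right idea, and your scaling $\int_0^R(\psi_n')^2r^{N+1}\,dr=O(n^{4-N})$ correctly locates the role of $N\geq5$.

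The one loose end is your phrase ``for the natural classes of $W_2$ under consideration'': the theorem assumes only that $W_2$ is smooth on $(0,R)$, so $W_2$ may be unbounded below near the origin. Note first that Cauchy--Schwarz is not even needed for the cross terms: for $n$ large the supports of $\phi$ and $\psi_n$ are disjoint, so both cross terms vanish identically. The only residual term is then $c a^2\int_0^R W_2\psi_n^2 r^{N+1}\,dr$ sitting on the right-hand side, and to pass to the limit you need its $\liminf$ to be nonnegative. Since $\int_0^R\psi_n^2r^{N+1}\,dr=O(n^{2-N})$, this holds whenever the negative part of $W_2$ is $O(r^{-\alpha})$ near $0$ for some $\alpha<N-2$ (which covers every example in the paper), but it is not automatic for arbitrary smooth $W_2$. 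You should either state this mild growth restriction on $W_2^-$ as a hypothesis or supply a separate argument for wildly singular negative potentials; as written, that single step is the only place your proof (and, for that matter, the paper's) is not airtight.
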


In the same spirit, we also study the following symmetrization result for the
Rellich type inequalities:

\begin{theorem}
[Symmetrization Principle]\label{T6}\textit{Let }$N\geq3$,\textit{ }%
$0<R\leq\infty$\textit{, }$W$\textit{ be a smooth function on }$\left(
0,R\right)  $. We have that
\[
\int_{B_{R}}|\Delta u|^{2}dx\geq\int_{B_{R}}W(|x|)|u|^{2}dx\;\text{for
all}\;u\in C_{0}^{\infty}(B_{R}\setminus\left\{  0\right\}  )
\]
if and only if
\[
\int_{B_{R}}|\Delta u|^{2}dx\geq\int_{B_{R}}W(|x|)|u|^{2}dx\;\text{for all
radial functions}\;u\in C_{0}^{\infty}(B_{R}\setminus\left\{  0\right\}  ).
\]

\end{theorem}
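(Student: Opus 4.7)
The plan is to reduce the symmetrization problem to a mode-by-mode comparison via spherical harmonics decomposition. The only-if direction is trivial (radial functions are a subclass), so the content is the if-direction. Given $u \in C_0^\infty(B_R\setminus\{0\})$, decompose $u(x) = \sum_{k\geq 0} f_k(r)\, Y_k(\omega)$ with $\{Y_k\}$ an orthonormal basis of spherical harmonics satisfying $-\Delta_{\mathbb{S}^{N-1}}Y_k = c_k Y_k$, $c_k = k(k+N-2)$. Each radial profile $f_k$ lies in $C_0^\infty((0,R))$ since $u$ has compact support away from the origin. Using $\Delta u = \sum_k \bigl(f_k'' + \tfrac{N-1}{r}f_k' - \tfrac{c_k}{r^2}f_k\bigr) Y_k$ together with orthogonality of the $Y_k$, both $\int_{B_R}|\Delta u|^2\,dx$ and $\int_{B_R}W(|x|)|u|^2\,dx$ split as sums over $k$. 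Hence it suffices to prove, for every $k \geq 1$ and every $f \in C_0^\infty((0,R))$,
\[
\int_0^R \bigl(f'' + \tfrac{N-1}{r}f' - \tfrac{c_k}{r^2}f\bigr)^2 r^{N-1}\,dr \;\geq\; \int_0^R W(r)\, f^2\, r^{N-1}\,dr,
\]
the case $k=0$ being precisely the hypothesis applied to radial functions.

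The heart of the argument is to expand the square and show that the correction arising from the $c_k/r^2$ term is a manifestly nonnegative quadratic form in $f$. Writing
\[
\bigl(f'' + \tfrac{N-1}{r}f' - \tfrac{c_k}{r^2}f\bigr)^2 = \bigl(f'' + \tfrac{N-1}{r}f'\bigr)^2 - \tfrac{2c_k}{r^2} f\bigl(f'' + \tfrac{N-1}{r}f'\bigr) + \tfrac{c_k^2}{r^4}f^2,
\]
multiplying by $r^{N-1}$, and integrating the middle piece by parts (all boundary terms at $0$ and $R$ vanish because $f$ is compactly supported in $(0,R)$), a short calculation reduces the correction to
\[
2c_k\int_0^R (f')^2\, r^{N-3}\,dr + c_k\bigl(c_k + 2(N-4)\bigr)\int_0^R f^2\, r^{N-5}\,dr.
\]
Once this identity is established, adding the correction to the $k=0$ inequality immediately yields the desired inequality at node $k$, provided both coefficients are nonnegative.

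The positivity check is where the hypothesis $N \geq 3$ enters crucially. Since $k \geq 1$ and $N \geq 3$, we have $c_k = k(k+N-2) \geq N-1 > 0$, so the first coefficient $2c_k$ is positive. For the second, the worst case is $k=1$, giving $c_1 + 2(N-4) = 3N - 9 \geq 0$ exactly when $N \geq 3$; for $k \geq 2$ one has $c_k \geq 2N$ and the bound is strict. Thus the total correction is nonnegative on every mode $k\ge 1$, and the if-direction follows. I expect the main technical obstacle to be the integration-by-parts bookkeeping that collapses the cross and quadratic terms into the clean combination $c_k\bigl(c_k + 2(N-4)\bigr)$; it is precisely the sign of this coefficient at $k=1$ that pins down $N \geq 3$ as the sharp dimensional threshold in the theorem.
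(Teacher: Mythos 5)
Your proposal is correct and follows essentially the same route as the paper: spherical harmonics decomposition, reduction to a mode-by-mode inequality, and the observation that the mode-$k$ correction $2c_k\int_0^R |f'|^2 r^{N-3}\,dr + c_k\bigl(c_k+2(N-4)\bigr)\int_0^R |f|^2 r^{N-5}\,dr$ is nonnegative for $k\ge 1$ because $c_k+2N-8\ge 3(N-3)\ge 0$ when $N\ge 3$. The integration-by-parts bookkeeping you flag as the main obstacle works out exactly to the coefficients you state, matching the paper's computation in Section 2.
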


This paper is organized as follows: In section 2, we will provide some
backgrounds about the spherical harmonics decompositions, and will also
present some useful computations. Our main results will be proved in section 3.

\section{Spherical Harmonics Decomposition}

First, we recall some backgrounds about the spherical harmonics decomposition.
Let $u\in C_{0}^{\infty}(B_{R})$, $0<R\leq\infty$. We can decompose $u$ into
spherical harmonics as follows:
\[
u(x)=u(r\sigma)=\sum_{k=0}^{\infty}u_{k}(r)\phi_{k}(\sigma),
\]
where $\phi_{k}(\sigma)$ are the orthonormal eigenfunctions of the
Laplace-Beltrami operator with corresponding eigenvalues $c_{k}=k(N+k-2)$,
$k\geq0$. Moreover, the components $u_{k}\in C_{0}^{\infty}[0,R)$ and satisfy
$u_{k}(r)=O(r^{k})$, $u_{k}^{\prime}(r)=O(r^{k-1})$ as $r\downarrow0$. In
particular,
\[
\phi_{0}(\sigma)=1,\;c_{0}=0\;\text{and}\;u_{0}(r)=\dfrac{1}{|\partial B_{r}%
|}\int_{\partial B_{r}}uds.
\]

Now, we consider%
\begin{align*}
\int_{B_{R}}|\Delta u|^{2}dx  &  =\int_{S^{N-1}}\int_{0}^{R}\left\vert
\Delta_{r}u+\dfrac{1}{r^{2}}\Delta_{S^{N-1}}u\right\vert ^{2}r^{N-1}%
drd\sigma\\
&  =\sum_{k=0}^{\infty}\left(  \int_{0}^{R}r^{N-1}\left\vert \Delta_{r}%
u_{k}\right\vert ^{2}dr+c_{k}^{2}\int_{0}^{R}r^{N-5}\left\vert u_{k}%
\right\vert ^{2}dr-2c_{k}\int_{0}^{R}u_{k}\Delta_{r}u_{k}r^{N-3}dr\right)  ,
\end{align*}
where
\[
\Delta_{r}:=\partial_{rr}+\dfrac{N-1}{r}\partial_{r}.
\]
Moreover,
\begin{align*}
\int_{0}^{R}r^{N-1}\left\vert \Delta_{r}u_{k}\right\vert ^{2}dr  &  =\int
_{0}^{R}r^{N-1}\left\vert u_{k}^{\prime\prime}+\dfrac{N-1}{r}u_{k}^{\prime
}\right\vert ^{2}dr\\
&  =\int_{0}^{R}r^{N-1}\left\vert u_{k}^{\prime\prime}\right\vert
^{2}dr+2(N-1)\int_{0}^{R}r^{N-2}u_{k}^{\prime}u_{k}^{\prime\prime}dr\\
&  \hspace{0.2in}+(N-1)^{2}\int_{0}^{R}r^{N-3}\left\vert u_{k}^{\prime
}\right\vert ^{2}dr\\
&  =\int_{0}^{R}r^{N-1}\left\vert u_{k}^{\prime\prime}\right\vert
^{2}dr+(N-1)\int_{0}^{R}r^{N-3}\left\vert u_{k}^{\prime}\right\vert ^{2}dr.
\end{align*}
Furthermore,
\begin{align*}
2c_{k}\int_{0}^{R}u_{k}\Delta_{r}u_{k}r^{N-3}dr  &  =2c_{k}\int_{0}^{R}%
r^{N-3}u_{k}u_{k}^{\prime\prime}dr+2(N-1)c_{k}\int_{0}^{R}r^{N-4}u_{k}%
u_{k}^{\prime}dr\\
&  =-2c_{k}\int_{0}^{R}r^{N-3}\left\vert u_{k}^{\prime}\right\vert
^{2}dr-2(N-4)c_{k}\int_{0}^{R}r^{N-5}|u_{k}|^{2}dr.
\end{align*}
Therefore,
\begin{align*}
\int_{B_{R}}|\Delta u|^{2}dx  &  =\sum_{k=0}^{\infty}\left(  \int_{0}%
^{R}r^{N-1}\left\vert u_{k}^{\prime\prime}\right\vert ^{2}dr+((N-1)+2c_{k}%
)\int_{0}^{R}r^{N-3}\left\vert u_{k}^{\prime}\right\vert ^{2}dr\right. \\
&  \hspace{0.2in}\left.  +(c_{k}^{2}+2(N-4)c_{k})\int_{0}^{R}r^{N-5}%
|u_{k}|^{2}dr\right)  .
\end{align*}

Similarly,
\begin{align*}
\int_{B_{R}}W(|x|)|\nabla u|^{2}dx  &  =\int_{S^{N-1}}\int_{0}^{R}\left(
W(r)|\partial_{r}u|^{2}+W(r)\dfrac{|\nabla_{S^{N-1}}u|^{2}}{r^{2}}\right)
r^{N-1}drd\sigma\\
&  =\sum_{k=0}^{\infty}\left(  \int_{0}^{R}W(r)r^{N-1}|u_{k}^{\prime}%
|^{2}dr+c_{k}\int_{0}^{R}W(r)r^{N-3}|u_{k}|^{2}dr\right)  .
\end{align*}

We also have%
\begin{align*}
\int_{B_{R}}V(|x|)|\Delta u|^{2}dx  &  =\sum_{k=0}^{\infty}\left[  \int
_{0}^{R}V(r)r^{N-1}\left\vert u_{k}^{\prime\prime}\right\vert ^{2}%
dr+((N-1)+2c_{k})\int_{0}^{R}V(r)r^{N-3}\left\vert u_{k}^{\prime}\right\vert
^{2}dr\right. \\
&  \hspace{0.2in}+(c_{k}^{2}+2(N-4)c_{k})\int_{0}^{R}V(r)r^{N-5}\left\vert
u_{k}\right\vert ^{2}dr-(N-1)\int_{0}^{R}V^{\prime}\left(  r\right)
r^{N-2}\left\vert u_{k}^{\prime}\right\vert ^{2}dr\\
&  \hspace{0.2in}\left.  -(N-5)c_{k}\int_{0}^{R}V^{\prime}\left(  r\right)
r^{N-4}\left\vert u_{k}\right\vert ^{2}dr-c_{k}\int_{0}^{R}V^{\prime\prime
}\left(  r\right)  r^{N-3}\left\vert u_{k}\right\vert ^{2}dr\right]
\end{align*}
and
\[
\int_{B_{R}}V(|x|)|u|^{2}dx=\sum_{k=0}^{\infty}\int_{0}^{R}V(r)r^{N-1}%
|u_{k}|^{2}dr.
\]

\section{Proofs of main results}

Though part 1 of Theorem \ref{T1} is is just a special case of Theorem
\ref{T2}, we still present here a proof for the part 1 of Theorem \ref{T1} in
order to illustrate our main ideas.

\begin{proof}
[Proof of Theorem \ref{T1}, part 1]Recall that by using spherical harmonics
decomposition $u(x)=\sum_{k=0}^{\infty}u_{k}(r)\phi_{k}(\sigma)$, we get from
the previous section that
\begin{align*}
\int_{B_{R}}|\Delta u|^{2}dx  &  =\sum_{k=0}^{\infty}\left(  \int_{0}%
^{R}r^{N-1}|u_{k}^{\prime\prime}|^{2}dr+((N-1)+2c_{k})\int_{0}^{R}%
r^{N-3}|u_{k}^{\prime}|^{2}dr\right. \\
&  \hspace{0.2in}\left.  +(c_{k}^{2}+2(N-4)c_{k})\int_{0}^{R}r^{N-5}%
|u_{k}|^{2}dr\right)
\end{align*}
and
\[
\int_{B_{R}}W_{2}(|x|)|\nabla u|^{2}dx=\sum_{k=0}^{\infty}\left(  \int_{0}%
^{R}W_{2}(r)r^{N-1}|u_{k}^{\prime}|^{2}dr+c_{k}\int_{0}^{R}W_{2}%
(r)r^{N-3}|u_{k}|^{2}dr\right)  .
\]
Therefore, it suffices to prove that
\begin{align*}
\int_{0}^{R}r^{N-1}|u_{k}^{\prime\prime}|^{2}dr  &  +((N-1)+2c_{k})\int
_{0}^{R}r^{N-3}|u_{k}^{\prime}|^{2}dr+(c_{k}^{2}+2(N-4)c_{k})\int_{0}%
^{R}r^{N-5}|u_{k}|^{2}dr\\
&  -\int_{0}^{R}W_{2}(r)r^{N-1}|u_{k}^{\prime}|^{2}dr-c_{k}\int_{0}^{R}%
W_{2}(r)r^{N-3}|u_{k}|^{2}dr\geq0.
\end{align*}
Firstly, we consider for the non $c_{k}$ term and will prove that
\[
\int_{0}^{R}r^{N-1}|u_{k}^{\prime\prime}|^{2}dr+(N-1)\int_{0}^{R}r^{N-3}%
|u_{k}^{\prime}|^{2}dr-\int_{0}^{R}W_{2}(r)r^{N-1}|u_{k}^{\prime}|^{2}%
dr\geq0.
\]
Indeed, by using the fact that $(1,W_{2})$ is a $\left(  N+2\right)
$-dimensional Bessel pair, we have
\[
\int_{0}^{R}r^{N+1}\left\vert \left(  \dfrac{u_{k}^{\prime}}{r}\right)
^{\prime}\right\vert ^{2}dr\geq\int_{0}^{R}W_{2}(r)r^{N+1}\left\vert
\dfrac{u_{k}^{\prime}}{r}\right\vert ^{2}dr,
\]
which is equivalent to
\[
\int_{0}^{R}r^{N-1}|u_{k}^{\prime\prime}|^{2}dr-2\int_{0}^{R}r^{N-2}%
u_{k}^{\prime\prime}u_{k}^{\prime}dr+\int_{0}^{R}r^{N-3}|u_{k}^{\prime}%
|^{2}dr\geq\int_{0}^{R}W_{2}(r)r^{N-1}|u_{k}^{\prime}|^{2}dr.
\]
Moreover,
\[
-2\int_{0}^{R}r^{N-2}u_{k}^{\prime\prime}u_{k}^{\prime}dr=(N-2)\int_{0}%
^{R}r^{N-3}|u_{k}^{\prime}|^{2}dr.
\]
Then, we obtain the desired inequality for the non $c_{k}$ term. Next, we will
deal with the $c_{k}$ terms, i.e.
\[
2c_{k}\int_{0}^{R}r^{N-3}|u_{k}^{\prime}|^{2}dr+(c_{k}^{2}+2(N-4)c_{k}%
)\int_{0}^{R}r^{N-5}|u_{k}|^{2}dr-c_{k}\int_{0}^{R}W_{2}(r)r^{N-3}|u_{k}%
|^{2}dr\geq0.
\]
Since $c_{k}\geq N-1$ when $k\geq1$, it is enough to prove that
\[
2\int_{0}^{R}r^{N-3}|u_{k}^{\prime}|^{2}dr+3(N-3)\int_{0}^{R}r^{N-5}%
|u_{k}|^{2}dr-\int_{0}^{R}W_{2}(r)r^{N-3}|u_{k}|^{2}dr\geq0.
\]
Indeed, setting $u_{k}=r^{2}v_{k}$, we have
\begin{align*}
\int_{0}^{R}r^{N-3}|u_{k}^{\prime}|^{2}dr  &  =4\int_{0}^{R}r^{N-1}|v_{k}%
|^{2}dr+\int_{0}^{R}r^{N+1}|v_{k}^{\prime}|^{2}dr+4\int_{0}^{R}r^{N}v_{k}%
v_{k}^{\prime}dr.\\
&  =(4-2N)\int_{0}^{R}r^{N-1}|v_{k}|^{2}dr+\int_{0}^{R}r^{N+1}|v_{k}^{\prime
}|^{2}dr\\
&  =(4-2N)\int_{0}^{R}r^{N-5}|u_{k}|^{2}dr+\int_{0}^{R}r^{N+1}|v_{k}^{\prime
}|^{2}dr.
\end{align*}
Thus,
\begin{align*}
\int_{0}^{R}r^{N-3}|u_{k}^{\prime}|^{2}dr+(2N-4)\int_{0}^{R}r^{N-5}|u_{k}%
|^{2}dr=\int_{0}^{R}r^{N+1}|v_{k}^{\prime}|^{2}dr  &  \geq\int_{0}^{R}%
W_{2}(r)r^{N+1}|v_{k}|^{2}dr\\
&  \geq\int_{0}^{R}W_{2}(r)r^{N-3}|u_{k}|^{2}dr.
\end{align*}
Hence,
\begin{align*}
&  2\int_{0}^{R}r^{N-3}|u_{k}^{\prime}|^{2}dr+3(N-3)\int_{0}^{R}r^{N-5}%
|u_{k}|^{2}dr\\
&  =\left(  \int_{0}^{R}r^{N-3}|u_{k}^{\prime}|^{2}dr+(N-5)\int_{0}^{R}%
r^{N-5}|u_{k}|^{2}dr\right) \\
&  \hspace{0.2in} +\left(  \int_{0}^{R}r^{N-3}|u_{k}^{\prime}|^{2}%
dr+(2N-4)\int_{0}^{R}r^{N-5}|u_{k}|^{2}dr\right) \\
&  \geq\int_{0}^{R}W_{2}(r)r^{N-3}|u_{k}|^{2}dr,
\end{align*}
as desired.
\end{proof}

\begin{proof}
[Proof of Theorem \ref{T2}]If $u$ is radial, then
\begin{align*}
&  \int_{B_{R}}V_{2}(|x|)|\Delta u|^{2}dx-\int_{B_{R}}W_{2}(|x|)|\nabla
u|^{2}dx\\
&  =\int_{0}^{R}V_{2}(r)r^{N-1}\left\vert u^{\prime\prime}\right\vert
^{2}dr+(N-1)\int_{0}^{R}V_{2}(r)r^{N-3}\left\vert u^{\prime}\right\vert
^{2}dr\\
&  \hspace{0.2in}-(N-1)\int_{0}^{R}V_{2}^{\prime}\left(  r\right)
r^{N-2}\left\vert u^{\prime}\right\vert ^{2}dr-\int_{0}^{R}W_{2}%
(r)r^{N-1}\left\vert u^{\prime}\right\vert ^{2}dr.
\end{align*}
Since $\left(  V_{2},\left(  W_{2}+N\dfrac{V_{2}^{\prime}}{r}\right)  \right)
$ is a $\left(  N+2\right)  $-dimensional Bessel pair on $\left(  0,R\right)
$, we have
\[
\int_{0}^{R}V_{2}(r)r^{N+1}\left\vert \left(  \dfrac{u^{\prime}}{r}\right)
^{\prime}\right\vert ^{2}dr\geq\int_{0}^{R}\left(  W_{2}(r)+N\dfrac
{V_{2}^{\prime}(r)}{r}\right)  r^{N+1}\left\vert \dfrac{u^{\prime}}%
{r}\right\vert ^{2}dr,
\]
which is equivalent to
\begin{align*}
\int_{0}^{R}V_{2}(r)r^{N-1}\left\vert u^{\prime\prime}\right\vert ^{2}dr  &
-2\int_{0}^{R}V_{2}(r)r^{N-2}u^{\prime\prime}u^{\prime}dr+\int_{0}^{R}%
V_{2}(r)r^{N-3}\left\vert u^{\prime}\right\vert ^{2}dr\\
&  \geq\int_{0}^{R}W_{2}(r)r^{N-1}\left\vert u^{\prime}\right\vert
^{2}dr+N\int_{0}^{R}V_{2}^{\prime}\left(  r\right)  r^{N-2}\left\vert
u^{\prime}\right\vert ^{2}dr.
\end{align*}
Moreover,
\[
-2\int_{0}^{R}V_{2}(r)r^{N-2}u^{\prime\prime}u^{\prime}dr=\int_{0}^{R}%
V_{2}^{\prime}\left(  r\right)  r^{N-2}\left\vert u^{\prime}\right\vert
^{2}dr+(N-2)\int_{0}^{R}V_{2}(r)r^{N-3}\left\vert u^{\prime}\right\vert
^{2}dr.
\]
Therefore,
\begin{align*}
&  \int_{0}^{R}V_{2}(r)r^{N-1}\left\vert u^{\prime\prime}\right\vert ^{2}dr
+(N-1)\int_{0}^{R}V_{2}(r)r^{N-3}\left\vert u^{\prime}\right\vert ^{2}dr\\
&  \hspace{0.2in} -(N-1)\int_{0}^{R}V_{2}^{\prime}\left(  r\right)
r^{N-2}\left\vert u^{\prime}\right\vert ^{2}dr-\int_{0}^{R}W_{2}%
(r)r^{N-1}\left\vert u^{\prime}\right\vert ^{2}dr\geq0.
\end{align*}

Now, using the computations from the last section, we get with $u(x)=\sum
_{k=0}^{\infty}u_{k}(r)\phi_{k}(\sigma)$ that
\begin{align*}
&  \int_{B_{R}}V_{2}(|x|)|\Delta u|^{2}dx-\int_{B_{R}}W_{2}(|x|)|\nabla
u|^{2}dx\\
&  =\sum_{k=0}^{\infty}\left[  \int_{0}^{R}V_{2}(r)r^{N-1}\left\vert
u_{k}^{\prime\prime}\right\vert ^{2}dr+((N-1)+2c_{k})\int_{0}^{R}%
V_{2}(r)r^{N-3}\left\vert u_{k}^{\prime}\right\vert ^{2}dr\right. \\
&  \hspace{0.2in}+(c_{k}^{2}+2(N-4)c_{k})\int_{0}^{R}V_{2}(r)r^{N-5}\left\vert
u_{k}\right\vert ^{2}dr-(N-1)\int_{0}^{R}V_{2}^{\prime}\left(  r\right)
r^{N-2}\left\vert u_{k}^{\prime}\right\vert ^{2}dr\\
&  \hspace{0.2in}-(N-5)c_{k}\int_{0}^{R}V_{2}^{\prime}\left(  r\right)
r^{N-4}\left\vert u_{k}\right\vert ^{2}dr-c_{k}\int_{0}^{R}V_{2}^{\prime
\prime}\left(  r\right)  r^{N-3}\left\vert u_{k}\right\vert ^{2}dr\\
&  \hspace{0.2in}\left.  -\int_{0}^{R}W_{2}(r)r^{N-1}\left\vert u_{k}^{\prime
}\right\vert ^{2}dr-c_{k}\int_{0}^{R}W_{2}(r)r^{N-3}\left\vert u_{k}%
\right\vert ^{2}dr\right] \\
&  =\sum_{k=0}^{\infty}\left[  \left(  \int_{0}^{R}V_{2}(r)r^{N-1}\left\vert
u_{k}^{\prime\prime}\right\vert ^{2}dr+(N-1)\int_{0}^{R}V_{2}(r)r^{N-3}%
\left\vert u_{k}^{\prime}\right\vert ^{2}dr\right.  \right. \\
&  \hspace{0.2in}-\left.  (N-1)\int_{0}^{R}V_{2}^{\prime}\left(  r\right)
r^{N-2}\left\vert u_{k}^{\prime}\right\vert ^{2}dr-\int_{0}^{R}W_{2}%
(r)r^{N-1}\left\vert u_{k}^{\prime}\right\vert ^{2}dr\right) \\
&  \hspace{0.2in}+c_{k}\left(  2\int_{0}^{R}V_{2}(r)r^{N-3}\left\vert
u_{k}^{\prime}\right\vert ^{2}dr+(3N-9)\int_{0}^{R}V_{2}(r)r^{N-5}\left\vert
u_{k}\right\vert ^{2}dr\right. \\
&  \hspace{0.2in}-(N-5)\int_{0}^{R}V_{2}^{\prime}\left(  r\right)
r^{N-4}\left\vert u_{k}\right\vert ^{2}dr-\int_{0}^{R}W_{2}(r)r^{N-3}%
\left\vert u_{k}\right\vert ^{2}dr\\
&  \hspace{0.2in}-\left.  \left.  \int_{0}^{R}V_{2}^{\prime\prime}\left(
r\right)  r^{N-3}\left\vert u_{k}\right\vert ^{2}dr+(c_{k}-(N-1))\int_{0}%
^{R}V_{2}(r)r^{N-5}\left\vert u_{k}\right\vert ^{2}dr\right)  \right]  .
\end{align*}
Since each $u_{k}$ is radial, we obtain from above that
\begin{align}
&  \int_{0}^{R}V_{2}(r)r^{N-1}\left\vert u_{k}^{\prime\prime}\right\vert
^{2}dr +(N-1)\int_{0}^{R}V_{2}(r)r^{N-3}\left\vert u_{k}^{\prime}\right\vert
^{2}dr\nonumber\\
&  \hspace{0.2in} -(N-1)\int_{0}^{R}V_{2}^{\prime}\left(  r\right)
r^{N-2}\left\vert u_{k}^{\prime}\right\vert ^{2}dr-\int_{0}^{R}W_{2}%
(r)r^{N-1}\left\vert u_{k}^{\prime}\right\vert ^{2}dr\geq0. \label{eq 2.5}%
\end{align}
From \eqref{eq 2.5}, the condition \eqref{ConM} and the fact that $c_{k}%
\geq(N-1)$ $\forall k\geq1$, we achieve the desired result.
\end{proof}

\begin{proof}
[Proof of Theorem \ref{T2a}]To show Theorem \ref{T2a}, it suffices to show
that condition \eqref{Con2} implies condition \eqref{ConM}.

Indeed, by setting $u=r^{2}v$, we get
\begin{align*}
&  \int_{0}^{R}V_{2}(r)r^{N-3}\left\vert u^{\prime}\right\vert ^{2}dr\\
&  =4\int_{0}^{R}V_{2}(r)r^{N-1}\left\vert v\right\vert ^{2}dr+\int_{0}%
^{R}V_{2}(r)r^{N+1}\left\vert v^{\prime}\right\vert ^{2}dr+4\int_{0}^{R}%
V_{2}(r)r^{N}vv^{\prime}dr.\\
&  =(4-2N)\int_{0}^{R}V_{2}(r)r^{N-1}\left\vert v\right\vert ^{2}dr-2\int
_{0}^{R}V_{2}^{\prime}(r)r^{N}\left\vert v\right\vert ^{2}dr+\int_{0}^{R}%
V_{2}(r)r^{N+1}\left\vert v^{\prime}\right\vert ^{2}dr\\
&  =(4-2N)\int_{0}^{R}V_{2}(r)r^{N-5}\left\vert u\right\vert ^{2}dr-2\int
_{0}^{R}V_{2}^{\prime}(r)r^{N-4}\left\vert u\right\vert ^{2}dr+\int_{0}%
^{R}V_{2}(r)r^{N+1}\left\vert v^{\prime}\right\vert ^{2}dr.
\end{align*}
Thus, since $\left(  V_{2},\left(  W_{2}+N\dfrac{V_{2}^{\prime}}{r}\right)
\right)  $ is a $\left(  N+2\right)  $-dimensional Bessel pair on $\left(
0,R\right)  $, we have
\begin{align*}
\int_{0}^{R}V_{2}(r)r^{N-3}\left\vert u^{\prime}\right\vert ^{2}dr  &
+(2N-4)\int_{0}^{R}V_{2}(r)r^{N-5}\left\vert u\right\vert ^{2}dr+2\int_{0}%
^{R}V_{2}^{\prime}(r)r^{N-4}\left\vert u\right\vert ^{2}dr\\
&  =\int_{0}^{R}V_{2}(r)r^{N+1}\left\vert v^{\prime}\right\vert ^{2}dr\\
&  \geq\int_{0}^{R}\left(  W_{2}(r)+N\dfrac{V_{2}^{\prime}(r)}{r}\right)
r^{N+1}\left\vert v\right\vert ^{2}dr\\
&  \geq\int_{0}^{R}W_{2}(r)r^{N-3}\left\vert u\right\vert ^{2}dr+N\int_{0}%
^{R}V_{2}^{\prime}(r)r^{N-4}\left\vert u\right\vert ^{2}dr,
\end{align*}
and
\begin{align*}
&  \int_{0}^{R}V_{2}(r)r^{N-3}\left\vert u^{\prime}\right\vert ^{2}%
dr+(2N-4)\int_{0}^{R}V_{2}(r)r^{N-5}\left\vert u\right\vert ^{2}dr\\
&  \hspace{0.2in}+(2-N)\int_{0}^{R}V_{2}^{\prime}(r)r^{N-4}\left\vert
u\right\vert ^{2}dr-\int_{0}^{R}W_{2}(r)r^{N-3}\left\vert u\right\vert
^{2}dr\geq0.
\end{align*}
Therefore, if (\ref{Con2}) holds, then%
\begin{align*}
&  2\int_{0}^{R}V_{2}(r)r^{N-3}\left\vert u^{\prime}\right\vert ^{2}%
dr-\int_{0}^{R}V_{2}^{\prime\prime}\left(  r\right)  r^{N-3}\left\vert
u\right\vert ^{2}-\left(  N-5\right)  \int_{0}^{R}V_{2}^{\prime}\left(
r\right)  r^{N-4}\left\vert u\right\vert ^{2}dr\\
&  \hspace{0.2in} +(3N-9)\int_{0}^{R}V_{2}(r)r^{N-5}\left\vert u\right\vert
^{2}dr-\int_{0}^{R}W_{2}(r)r^{N-3}\left\vert u\right\vert ^{2}dr\\
&  =\left[  \int_{0}^{R}V_{2}(r)r^{N-3}\left\vert u^{\prime}\right\vert
^{2}dr+(2N-4)\int_{0}^{R}V_{2}(r)r^{N-5}\left\vert u\right\vert ^{2}dr\right.
\\
&  \hspace{0.2in} \left.  +(2-N)\int_{0}^{R}V_{2}^{\prime}(r)r^{N-4}\left\vert
u\right\vert ^{2}dr-\int_{0}^{R}W_{2}(r)r^{N-3}\left\vert u\right\vert
^{2}dr\right] \\
&  \hspace{0.2in} +\left[  \int_{0}^{R}V_{2}(r)r^{N-3}\left\vert u^{\prime
}\right\vert ^{2}dr+(N-5)\int_{0}^{R}V_{2}(r)r^{N-5}\left\vert u\right\vert
^{2}dr\right. \\
&  \hspace{0.2in} \left.  -\int_{0}^{R}V_{2}^{\prime\prime}\left(  r\right)
r^{N-3}\left\vert u\right\vert ^{2}dr+3\int_{0}^{R}V_{2}^{\prime}\left(
r\right)  r^{N-4}\left\vert u\right\vert ^{2}dr\right]  \geq0.
\end{align*}

This shows that condition \eqref{Con2} implies condition \eqref{ConM}.
\end{proof}

\begin{proof}
[Proof of Corollary \ref{C1}]Since $\left(  V_{1},W_{1}\right)  $ is a
$N$-dimensional Bessel pair on $\left(  0,R\right)  $, there exists a positive
function $\varphi$ such that
\[
\left(  r^{N-1}V_{1}\varphi^{\prime}\right)  ^{\prime}+r^{N-1}W_{1}%
\varphi=0\text{ on }\left(  0,R\right)  \text{.}%
\]
Let $\varphi=r\psi$. Then $\varphi^{\prime}=\psi+r\psi^{\prime}$ and
$\varphi^{\prime\prime}=2\psi^{\prime}+r\psi^{\prime\prime}$. Therefore, from%
\[
\varphi^{\prime\prime}+\left(  \frac{N-1}{r}+\frac{V_{1}^{\prime}}{V_{1}%
}\right)  \varphi^{\prime}+\frac{W_{1}}{V_{1}}\varphi=0\text{ on }\left(
0,R\right)  ,
\]
we obtain%
\[
2\psi^{\prime}+r\psi^{\prime\prime}+\left(  \frac{N-1}{r}+\frac{V_{1}^{\prime
}}{V_{1}}\right)  \left(  \psi+r\psi^{\prime}\right)  +\frac{W_{1}}{V_{1}%
}r\psi=0\text{ on }\left(  0,R\right)  .
\]
That is%
\[
\psi^{\prime\prime}+\left(  \frac{N+1}{r}+\frac{V_{1}^{\prime}}{V_{1}}\right)
\psi^{\prime}+\left(  \frac{N-1}{r^{2}}+\frac{V_{1}^{\prime}}{rV_{1}}%
+\frac{W_{1}}{V_{1}}\right)  \psi=0\text{ on }\left(  0,R\right)  .
\]
Hence $\left(  V_{1},\left(  W_{2}+N\dfrac{V_{1}^{\prime}}{r}\right)  \right)
$ is a $\left(  N+2\right)  $-dimensional Bessel pair on $\left(  0,R\right)
$ with $W_{2}=\frac{N-1}{r^{2}}V_{1}-\left(  N-1\right)  \frac{V_{1}^{\prime}%
}{r}+W_{1}$. Therefore, Theorem \ref{T2} yields%
\begin{align*}
\int_{B_{R}}V_{1}(|x|)|\Delta u|^{2}dx  &  \geq\int_{B_{R}}W_{1}(|x|)|\nabla
u|^{2}dx\\
&  +\left(  N-1\right)  \int_{B_{R}}\left(  \frac{V_{1}\left(  \left\vert
x\right\vert \right)  }{\left\vert x\right\vert ^{2}}-\frac{V_{1}^{\prime
}\left(  \left\vert x\right\vert \right)  }{\left\vert x\right\vert }\right)
\left\vert \nabla u\right\vert ^{2}dx.
\end{align*}

\end{proof}

\begin{proof}
[Proof of Theorem \ref{T3}]The case $N=1$ is obvious, so now we assume that
$N\geq2$. Let $u\in C_{0}^{\infty}\left(  B_{R}\setminus\left\{  0\right\}
\right)  $. Then, using the spherical harmonics decomposition $u(x)=\sum
_{k=0}^{\infty}u_{k}(r)\phi_{k}(\sigma)$, we get
\begin{align*}
&  \int_{B_{R}}V_{2}(|x|)|\Delta u|^{2}dx-\int_{B_{R}}W_{2}(|x|)|\nabla
u|^{2}dx\\
&  =\sum_{k=0}^{\infty}\left[  \int_{0}^{R}V_{2}(r)r^{N-1}\left\vert
u_{k}^{\prime\prime}\right\vert ^{2}dr+((N-1)+2c_{k})\int_{0}^{R}%
V_{2}(r)r^{N-3}\left\vert u_{k}^{\prime}\right\vert ^{2}dr\right. \\
&  \hspace{0.2in}+(c_{k}^{2}+2(N-4)c_{k})\int_{0}^{R}V_{2}(r)r^{N-5}\left\vert
u_{k}\right\vert ^{2}dr\\
&  \hspace{0.2in}-(N-1)\int_{0}^{R}V_{2}^{\prime}\left(  r\right)
r^{N-2}\left\vert u_{k}^{\prime}\right\vert ^{2}dr-(N-5)c_{k}\int_{0}^{R}%
V_{2}^{\prime}\left(  r\right)  r^{N-4}\left\vert u_{k}\right\vert ^{2}dr\\
&  \hspace{0.2in}\left.  -~c_{k}\int_{0}^{R}V_{2}^{\prime\prime}\left(
r\right)  r^{N-3}\left\vert u_{k}\right\vert ^{2}dr-\int_{0}^{R}%
W_{2}(r)r^{N-1}\left\vert u_{k}^{\prime}\right\vert ^{2}dr-c_{k}\int_{0}%
^{R}W_{2}(r)r^{N-3}\left\vert u_{k}\right\vert ^{2}dr\right]  .
\end{align*}

Therefore
\[
\int_{\mathbb{R}^{N}}V_{2}\left(  \left\vert x\right\vert \right)  |\Delta
u|^{2}dx\geq\int_{\mathbb{R}^{N}}W_{2}\left(  \left\vert x\right\vert \right)
|\nabla u|^{2}dx
\]
is equivalent to%
\begin{align*}
&  \int_{0}^{R}V_{2}\left(  r\right)  r^{N-1}\left\vert u_{k}^{\prime\prime
}\right\vert ^{2}dr+(N-1+2c_{k})\int_{0}^{R}V_{2}\left(  r\right)
r^{N-3}\left\vert u_{k}^{\prime}\right\vert ^{2}dr\\
&  \hspace{0.2in}+(c_{k}^{2}+2(N-4)c_{k})\int_{0}^{R}V_{2}\left(  r\right)
r^{N-5}|u_{k}|^{2}dr-\left(  N-1\right)  \int_{0}^{R}V_{2}^{\prime}\left(
r\right)  r^{N-2}\left\vert u_{k}^{\prime}\right\vert ^{2}dr\\
&  \hspace{0.2in}-\left(  N-5\right)  c_{k}\int_{0}^{R}V_{2}^{\prime}\left(
r\right)  r^{N-4}\left\vert u_{k}\right\vert ^{2}dr-c_{k}\int_{0}^{R}%
V_{2}^{\prime\prime}\left(  r\right)  r^{N-3}\left\vert u_{k}\right\vert
^{2}dr\\
&  \geq\left(  \int_{0}^{R}W_{2}\left(  r\right)  r^{N-1}\left\vert u^{\prime
}\right\vert ^{2}dr+c_{k}\int_{0}^{R}W_{2}\left(  r\right)  r^{N-3}\left\vert
u\right\vert ^{2}dr\right)  \text{ for all }k\geq0\text{.}%
\end{align*}
So now, we will need to prove that
\begin{align}
\int_{0}^{R}V_{2}(r)r^{N-1}\left\vert u^{\prime\prime}\right\vert
^{2}dr+(N-1)\int_{0}^{R}V_{2}(r)r^{N-3}\left\vert u^{\prime}\right\vert
^{2}dr  &  -(N-1)\int_{0}^{R}V_{2}^{\prime}\left(  r\right)  r^{N-2}\left\vert
u^{\prime}\right\vert ^{2}dr\nonumber\\
&  \geq\int_{0}^{R}W_{2}(r)r^{N-1}\left\vert u^{\prime}\right\vert ^{2}dr,
\label{n0}%
\end{align}
and%
\begin{align}
&  \int_{0}^{R}V_{2}(r)r^{N-1}\left\vert u^{\prime\prime}\right\vert
^{2}dr+((N-1)+2c_{1})\int_{0}^{R}V_{2}(r)r^{N-3}\left\vert u^{\prime
}\right\vert ^{2}dr\nonumber\\
&  \hspace{0.2in}+(c_{1}^{2}+2(N-4)c_{1})\int_{0}^{R}V_{2}(r)r^{N-5}\left\vert
u\right\vert ^{2}dr-(N-1)\int_{0}^{R}V_{2}^{\prime}\left(  r\right)
r^{N-2}\left\vert u^{\prime}\right\vert ^{2}dr\nonumber\\
&  \hspace{0.2in}-(N-5)c_{1}\int_{0}^{R}V_{2}^{\prime}\left(  r\right)
r^{N-4}\left\vert u\right\vert ^{2}dr-c_{1}\int_{0}^{R}V_{2}^{\prime\prime
}\left(  r\right)  r^{N-3}\left\vert u\right\vert ^{2}dr\nonumber\\
&  \geq\int_{0}^{R}W_{2}(r)r^{N-1}\left\vert u^{\prime}\right\vert
^{2}dr+c_{1}\int_{0}^{R}W_{2}(r)r^{N-3}\left\vert u\right\vert ^{2}dr,
\label{n1}%
\end{align}
for all $u\in C_{0}^{\infty}\left(  0,R\right)  $ imply%
\begin{align*}
&  \int_{0}^{R}V_{2}\left(  r\right)  r^{N-1}\left\vert u^{\prime\prime
}\right\vert ^{2}dr+(N-1+2c_{k})\int_{0}^{R}V_{2}\left(  r\right)
r^{N-3}\left\vert u^{\prime}\right\vert ^{2}dr\\
&  \hspace{0.2in}+(c_{k}^{2}+2(N-4)c_{k})\int_{0}^{R}V_{2}\left(  r\right)
r^{N-5}|u|^{2}dr-\left(  N-1\right)  \int_{0}^{R}V_{2}^{\prime}\left(
r\right)  r^{N-2}\left\vert u^{\prime}\right\vert ^{2}dr\\
&  \hspace{0.2in}-\left(  N-5\right)  c_{k}\int_{0}^{R}V_{2}^{\prime}\left(
r\right)  r^{N-4}\left\vert u\right\vert ^{2}dr-c_{k}\int_{0}^{R}V_{2}%
^{\prime\prime}\left(  r\right)  r^{N-3}\left\vert u\right\vert ^{2}dr\\
&  \geq\left(  \int_{0}^{R}W_{2}\left(  r\right)  r^{N-1}\left\vert u^{\prime
}\right\vert ^{2}dr+c_{k}\int_{0}^{R}W_{2}\left(  r\right)  r^{N-3}\left\vert
u\right\vert ^{2}dr\right)
\end{align*}
for all $k\geq2$ and $u\in C_{0}^{\infty}\left(  0,R\right)  .$

Because of (\ref{n1}), we just need to prove that for all $k\geq2$ and $u\in
C_{0}^{\infty}\left(  0,R\right)  :$
\begin{align*}
&  2\left(  c_{k}-c_{1}\right)  \int_{0}^{R}V_{2}\left(  r\right)
r^{N-3}\left\vert u^{\prime}\right\vert ^{2}dr+(c_{k}^{2}+2(N-4)c_{k}%
-c_{1}^{2}-2(N-4)c_{1})\int_{0}^{R}V_{2}\left(  r\right)  r^{N-5}|u|^{2}dr\\
&  \hspace{0.2in}-\left(  N-5\right)  \left(  c_{k}-c_{1}\right)  \int_{0}%
^{R}V_{2}^{\prime}\left(  r\right)  r^{N-4}\left\vert u\right\vert
^{2}dr-\left(  c_{k}-c_{1}\right)  \int_{0}^{R}V_{2}^{\prime\prime}\left(
r\right)  r^{N-3}\left\vert u\right\vert ^{2}dr\\
&  \geq\left(  c_{k}-c_{1}\right)  \int_{0}^{R}W_{2}\left(  r\right)
r^{N-3}\left\vert u\right\vert ^{2}dr.
\end{align*}
Equivalently%
\begin{align*}
&  2\int_{0}^{R}V_{2}\left(  r\right)  r^{N-3}\left\vert u^{\prime}\right\vert
^{2}dr+\left(  c_{k}+3N-9\right)  \int_{0}^{R}V_{2}\left(  r\right)
r^{N-5}|u|^{2}dr\\
&  \hspace{0.2in}-\left(  N-5\right)  \int_{0}^{R}V_{2}^{\prime}\left(
r\right)  r^{N-4}\left\vert u\right\vert ^{2}dr-\int_{0}^{R}V_{2}%
^{\prime\prime}\left(  r\right)  r^{N-3}\left\vert u\right\vert ^{2}dr\\
&  \geq\int_{0}^{R}W_{2}\left(  r\right)  r^{N-3}\left\vert u\right\vert
^{2}dr.
\end{align*}
Now, by noting that when $k\geq2$, $c_{k}\geq2N$, we obtain our desired result.
\end{proof}

\begin{proof}
[Proof of Theorem \ref{T3a}]

Indeed, thanks to (node $k=0$), we have
\begin{align*}
&  \int_{0}^{R}V_{2}(r)r^{N-1}\left\vert \left(  \frac{u}{r}\right)  ^{\prime
}\right\vert ^{2}dr+(N-1)\int_{0}^{R}V_{2}(r)r^{N-3}\left\vert \frac{u}%
{r}\right\vert ^{2}dr-(N-1)\int_{0}^{R}V_{2}^{\prime}\left(  r\right)
r^{N-2}\left\vert \frac{u}{r}\right\vert ^{2}dr\\
&  =\int_{0}^{R}V_{2}(r)r^{N-3}\left\vert u^{\prime}\right\vert ^{2}%
dr+(2N-4)\int_{0}^{R}V_{2}(r)r^{N-5}\left\vert u\right\vert ^{2}%
dr-(N-2)\int_{0}^{R}V_{2}^{\prime}\left(  r\right)  r^{N-4}\left\vert
u\right\vert ^{2}dr\\
&  \geq\int_{0}^{R}W_{2}(r)r^{N-3}\left\vert u\right\vert ^{2}dr.
\end{align*}
Therefore,
\begin{align*}
&  \int_{0}^{R}V_{2}\left(  r\right)  r^{N-3}\left\vert u^{\prime}\right\vert
^{2}dr+\left(  3N-5\right)  \int_{0}^{R}V_{2}\left(  r\right)  r^{N-5}%
|u|^{2}dr\\
&  \hspace{0.2in}+\int_{0}^{R}\left(  3\frac{V_{2}^{\prime}(r)}{r}%
-V_{2}^{\prime\prime}(r)\right)  r^{N-3}\left\vert u\right\vert ^{2}dr\\
&  \hspace{0.2in}+\int_{0}^{R}V_{2}\left(  r\right)  r^{N-3}\left\vert
u^{\prime}\right\vert ^{2}dr+(2N-4)\int_{0}^{R}V_{2}\left(  r\right)
r^{N-5}|u|^{2}dr\\
&  \hspace{0.2in}-(N-2)\int_{0}^{R}V_{2}^{\prime}\left(  r\right)
r^{N-4}\left\vert u\right\vert ^{2}dr\\
&  \geq\int_{0}^{R}W_{2}(r)r^{N-3}\left\vert u\right\vert ^{2}dr.
\end{align*}

\end{proof}

\begin{proof}
[Proof of Theorem \ref{T3.1}]From Theorem \ref{T3}, it is now enough to show
that
\begin{align}
\int_{0}^{R}V_{2}(r)r^{N-1}\left\vert u^{\prime\prime}\right\vert
^{2}dr+(N-1)\int_{0}^{R}V_{2}(r)r^{N-3}\left\vert u^{\prime}\right\vert
^{2}dr  &  -(N-1)\int_{0}^{R}V_{2}^{\prime}\left(  r\right)  r^{N-2}\left\vert
u^{\prime}\right\vert ^{2}dr\nonumber\\
&  \geq\int_{0}^{R}W_{2}(r)r^{N-1}\left\vert u^{\prime}\right\vert ^{2}dr,
\label{cond2.4}%
\end{align}
for all $u\in C_{0}^{\infty}\left(  0,R\right)  $ implies%
\begin{align*}
&  \int_{0}^{R}V_{2}(r)r^{N-1}\left\vert u^{\prime\prime}\right\vert
^{2}dr+((N-1)+2c_{1})\int_{0}^{R}V_{2}(r)r^{N-3}\left\vert u^{\prime
}\right\vert ^{2}dr\\
&  \hspace{0.2in}+(c_{1}^{2}+2(N-4)c_{1})\int_{0}^{R}V_{2}(r)r^{N-5}\left\vert
u\right\vert ^{2}dr-(N-1)\int_{0}^{R}V_{2}^{\prime}\left(  r\right)
r^{N-2}\left\vert u^{\prime}\right\vert ^{2}dr\\
&  \hspace{0.2in}-(N-5)c_{1}\int_{0}^{R}V_{2}^{\prime}\left(  r\right)
r^{N-4}\left\vert u\right\vert ^{2}dr-c_{1}\int_{0}^{R}V_{2}^{\prime\prime
}\left(  r\right)  r^{N-3}\left\vert u\right\vert ^{2}dr\\
&  \geq\int_{0}^{R}W_{2}(r)r^{N-1}\left\vert u^{\prime}\right\vert
^{2}dr+c_{1}\int_{0}^{R}W_{2}(r)r^{N-3}\left\vert u\right\vert ^{2}dr
\end{align*}
for all $u\in C_{0}^{\infty}\left(  0,R\right)  $.

Indeed, assume \eqref{cond2.4}, we will now prove that%
\begin{align*}
&  2c_{1}\int_{0}^{R}V_{2}(r)r^{N-3}\left\vert u^{\prime}\right\vert
^{2}dr+(c_{1}^{2}+2(N-4)c_{1})\int_{0}^{R}V_{2}(r)r^{N-5}\left\vert
u\right\vert ^{2}dr\\
&  \hspace{0.2in}-(N-5)c_{1}\int_{0}^{R}V_{2}^{\prime}\left(  r\right)
r^{N-4}\left\vert u\right\vert ^{2}dr-c_{1}\int_{0}^{R}V_{2}^{\prime\prime
}\left(  r\right)  r^{N-3}\left\vert u\right\vert ^{2}dr\\
&  \geq c_{1}\int_{0}^{R}W_{2}(r)r^{N-3}\left\vert u\right\vert ^{2}dr.
\end{align*}
Equivalently,%
\begin{align*}
&  2\int_{0}^{R}V_{2}(r)r^{N-3}\left\vert u^{\prime}\right\vert ^{2}%
dr+(3N-9)\int_{0}^{R}V_{2}(r)r^{N-5}\left\vert u\right\vert ^{2}dr\\
&  \hspace{0.2in}-(N-5)\int_{0}^{R}V_{2}^{\prime}\left(  r\right)
r^{N-4}\left\vert u\right\vert ^{2}dr-\int_{0}^{R}V_{2}^{\prime\prime}\left(
r\right)  r^{N-3}\left\vert u\right\vert ^{2}dr\\
&  \geq\int_{0}^{R}W_{2}(r)r^{N-3}\left\vert u\right\vert ^{2}dr.
\end{align*}
This is exactly the constraint (\ref{ConM}).

If $V_{2}$ sastisfies (\ref{Con2}), that is
\[
V_{2}^{\prime\prime}(r)-3\dfrac{V_{2}^{\prime}(r)}{r}-\left(  N-5\right)
\dfrac{V_{2}(r)}{r^{2}}\leq0\text{ on }\left(  0,R\right)  \text{,}%
\]
then from \eqref{cond2.4}, we deduce that
\begin{align*}
\int_{0}^{R}V_{2}(r)r^{N-1}\left\vert \left(  \frac{u}{r}\right)  ^{\prime
}\right\vert ^{2}dr+(N-1)\int_{0}^{R}V_{2}(r)r^{N-3}\left\vert \frac{u}%
{r}\right\vert ^{2}dr  &  -(N-1)\int_{0}^{R}V_{2}^{\prime}\left(  r\right)
r^{N-2}\left\vert \frac{u}{r}\right\vert ^{2}dr\\
&  \geq\int_{0}^{R}W_{2}(r)r^{N-1}\left\vert \frac{u}{r}\right\vert ^{2}dr.
\end{align*}
Equivalently%
\begin{align*}
&  \int_{0}^{R}V_{2}(r)r^{N-3}\left\vert u^{\prime}\right\vert ^{2}%
dr+(2N-4)\int_{0}^{R}V_{2}(r)r^{N-5}\left\vert u\right\vert ^{2}dr\\
&  \hspace{0.2in}+(2-N)\int_{0}^{R}V_{2}^{\prime}(r)r^{N-4}\left\vert
u\right\vert ^{2}dr-\int_{0}^{R}W_{2}(r)r^{N-3}\left\vert u\right\vert
^{2}dr\geq0.
\end{align*}
Therefore,
\begin{align*}
&  2\int_{0}^{R}V_{2}(r)r^{N-3}\left\vert u^{\prime}\right\vert ^{2}%
dr-\int_{0}^{R}V_{2}^{\prime\prime}\left(  r\right)  r^{N-3}\left\vert
u\right\vert ^{2}-\left(  N-5\right)  \int_{0}^{R}V_{2}^{\prime}\left(
r\right)  r^{N-4}\left\vert u\right\vert ^{2}dr\\
&  \hspace{0.2in}+(3N-9)\int_{0}^{R}V_{2}(r)r^{N-5}\left\vert u\right\vert
^{2}dr-\int_{0}^{R}W_{2}(r)r^{N-3}\left\vert u\right\vert ^{2}dr\\
&  =\left[  \int_{0}^{R}V_{2}(r)r^{N-3}\left\vert u^{\prime}\right\vert
^{2}dr+(2N-4)\int_{0}^{R}V_{2}(r)r^{N-5}\left\vert u\right\vert ^{2}dr\right.
\\
&  \hspace{0.2in}\left.  +(2-N)\int_{0}^{R}V_{2}^{\prime}(r)r^{N-4}\left\vert
u\right\vert ^{2}dr-\int_{0}^{R}W_{2}(r)r^{N-3}\left\vert u\right\vert
^{2}dr\right] \\
&  \hspace{0.2in}+\left[  \int_{0}^{R}V_{2}(r)r^{N-3}\left\vert u^{\prime
}\right\vert ^{2}dr+(N-5)\int_{0}^{R}V_{2}(r)r^{N-5}\left\vert u\right\vert
^{2}dr\right. \\
&  \hspace{0.2in}\left.  -\int_{0}^{R}V_{2}^{\prime\prime}\left(  r\right)
r^{N-3}\left\vert u\right\vert ^{2}dr+3\int_{0}^{R}V_{2}^{\prime}\left(
r\right)  r^{N-4}\left\vert u\right\vert ^{2}dr\right]  \geq0.
\end{align*}

\end{proof}

\begin{proof}
[Proof of Theorem \ref{T4}]As in the proof of Theorem \ref{T3}, we get%
\begin{align*}
&  2\int_{0}^{R}r^{N-3}\left\vert u^{\prime}\right\vert ^{2}dr+\left(
c_{k}+3N-9\right)  \int_{0}^{R}r^{N-5}|u|^{2}dr\\
&  \geq\int_{0}^{R}r^{N-3}\left\vert u^{\prime}\right\vert ^{2}dr+\left(
3N-5\right)  \int_{0}^{R}r^{N-5}|u|^{2}dr\\
&  \hspace{0.2in}+\int_{0}^{R}r^{N-3}\left\vert u^{\prime}\right\vert
^{2}dr+(2N-4)\int_{0}^{R}r^{N-5}|u|^{2}dr\\
&  \geq\left(  \frac{\left(  N-4\right)  ^{2}}{4}+3N-5\right)  \int_{0}%
^{R}r^{N-5}|u|^{2}dr\\
&  \hspace{0.2in}+\int_{0}^{R}r^{N-3}\left\vert u^{\prime}\right\vert
^{2}dr+(2N-4)\int_{0}^{R}r^{N-5}|u|^{2}dr\\
&  \geq\int_{0}^{R}W_{2}(r)r^{N-3}\left\vert u\right\vert ^{2}dr.
\end{align*}

\end{proof}

\begin{proof}
[Proof of Theorem \ref{T5.1}]We note that
\begin{equation}
\int_{B_{R}}|\Delta u|^{2}dx\geq\int_{B_{R}}W_{2}(|x|)|\nabla u|^{2}dx.
\label{HR3}%
\end{equation}
for a radial function$\;u\in C_{0}^{\infty}(B_{R}\cap\mathbb{R}^{N}%
\setminus\left\{  0\right\}  )$ is equivalent to%
\[
\int_{0}^{R}r^{N-1}\left\vert u^{\prime\prime}\right\vert ^{2}dr+(N-1)\int
_{0}^{R}r^{N-3}\left\vert u^{\prime}\right\vert ^{2}dr\geq\int_{0}%
^{R}W(r)r^{N-1}|u^{\prime}|^{2}dr,
\]
which is now equivalent to
\begin{equation}
\int_{0}^{R}\left\vert v^{\prime}\right\vert ^{2}r^{N+1}dr\geq\int_{0}%
^{R}W_{2}\left(  r\right)  \left\vert v\right\vert ^{2}r^{N+1}dr \label{H1}%
\end{equation}
with $u^{\prime}\left(  r\right)  =rv\left(  r\right)  $. Therefore, by
Theorem 4.1.1 in \cite{GM1}, we deduce that there exists $c>0$ such that
$\left(  1,cW_{2}\right)  $ is a $\left(  N+2\right)  $-dimensional Bessel
pair on $\left(  0,R\right)  $.
\end{proof}

\begin{proof}
[Proof of Theorem \ref{T6}]Let $u\in C_{0}^{\infty}(B_{R}\setminus\left\{
0\right\}  )$. Then, using the spherical harmonics decomposition
$u(x)=\sum_{k=0}^{\infty}u_{k}(r)\phi_{k}(\sigma)$, we get
\begin{align*}
&  \int_{B_{R}}|\Delta u|^{2}dx-\int_{B_{R}}W(|x|)|u|^{2}dx\\
&  =\sum_{k=0}^{\infty}\left[  \int_{0}^{R}r^{N-1}\left\vert u_{k}%
^{\prime\prime}\right\vert ^{2}dr+((N-1)+2c_{k})\int_{0}^{\infty}%
r^{N-3}\left\vert u_{k}^{\prime}\right\vert ^{2}dr\right. \\
&  \hspace{0.2in}\left.  +(c_{k}^{2}+2(N-4)c_{k})\int_{0}^{\infty}%
r^{N-5}\left\vert u_{k}\right\vert ^{2}dr-\int_{0}^{\infty}W(r)r^{N-1}%
|u_{k}|^{2}dr\right]  .
\end{align*}
Since $\{u_{k}\}_{k=0}^{\infty}$ are independent, the forward direction of
theorem is obvious. As a result, we just need to consider the backward
direction. That is, it suffices to prove that%
\begin{align*}
&  \int_{0}^{R}r^{N-1}\left\vert v^{\prime\prime}\right\vert ^{2}%
dr+(N-1)\int_{0}^{R}r^{N-3}\left\vert v^{\prime}\right\vert ^{2}dr\\
&  \hspace{0.2in}-\int_{0}^{R}W(r)r^{N-1}|v|^{2}dr\geq0\text{ }\forall v\in
C_{0}^{\infty}\left(  0,R\right)
\end{align*}
implies
\begin{align*}
\int_{0}^{R}r^{N-1}\left\vert v^{\prime\prime}\right\vert ^{2}dr  &
+((N-1)+2c_{k})\int_{0}^{R}r^{N-3}\left\vert v^{\prime}\right\vert
^{2}dr+(c_{k}^{2}+2(N-4)c_{k})\int_{0}^{R}r^{N-5}|v|^{2}dr\\
&  -\int_{0}^{R}W(r)r^{N-1}|v|^{2}dr\geq0\text{ }\forall v\in C_{0}^{\infty
}\left(  0,R\right)
\end{align*}
for all $k\geq1$.

This is obvious since for $N\geq3$ and $c_{k}\geq N-1$ $\forall k\geq1$, we
have%
\begin{align*}
&  \int_{0}^{R}r^{N-1}\left\vert v^{\prime\prime}\right\vert ^{2}%
dr+((N-1)+2c_{k})\int_{0}^{R}r^{N-3}\left\vert v^{\prime}\right\vert ^{2}dr\\
&  \hspace{0.2in}+(c_{k}^{2}+2(N-4)c_{k})\int_{0}^{R}r^{N-5}|v|^{2}dr-\int
_{0}^{R}W(r)r^{N-1}|v|^{2}dr\\
&  =\int_{0}^{R}r^{N-1}\left\vert v^{\prime\prime}\right\vert ^{2}%
dr+(N-1)\int_{0}^{R}r^{N-3}\left\vert v^{\prime}\right\vert ^{2}dr-\int
_{0}^{R}W(r)r^{N-1}|v|^{2}dr\\
&  \hspace{0.2in}+c_{k}\left[  2\int_{0}^{R}r^{N-3}\left\vert v^{\prime
}\right\vert ^{2}dr+\left(  c_{k}+2N-8\right)  \int_{0}^{R}r^{N-5}%
|v|^{2}dr\right] \\
&  \geq\int_{0}^{R}r^{N-1}\left\vert v^{\prime\prime}\right\vert
^{2}dr+(N-1)\int_{0}^{R}r^{N-3}\left\vert v^{\prime}\right\vert ^{2}%
dr-\int_{0}^{R}W(r)r^{N-1}|v|^{2}dr\\
&  \hspace{0.2in}+c_{k}\left[  2\int_{0}^{R}r^{N-3}\left\vert v^{\prime
}\right\vert ^{2}dr+3\left(  N-3\right)  \int_{0}^{R}r^{N-5}|v|^{2}dr\right]
\\
&  \geq\int_{0}^{R}r^{N-1}\left\vert v^{\prime\prime}\right\vert
^{2}dr+(N-1)\int_{0}^{R}r^{N-3}\left\vert v^{\prime}\right\vert ^{2}%
dr-\int_{0}^{R}W(r)r^{N-1}|v|^{2}dr.
\end{align*}

\end{proof}

\end{document}